\newcommand \on{\overline{\nabla}}
\newcommand \la{\lambda}
\newcommand \ve{\varepsilon}
\newcommand \br{\mathbb{R}}
\newcommand \bO{\mathbb{O}}
\newcommand \Span{\operatorname{Span}}
\newcommand \id{\operatorname{id}}
\newcommand \<{\langle}
\renewcommand \>{\rangle}
\newcommand \ip{\< \cdot, \cdot \>}
\newcommand \Tr{\operatorname{Tr}}
\newcommand \tM{\overline{M}}
\newcommand \tc{\overline{c}}
\newcommand \tR{\overline{R}}
\newcommand \gv{\mathfrak{v}}
\newcommand \gz{\mathfrak{z}}
\newcommand \ga{\mathfrak{a}}
\newcommand \gs{\mathfrak{s}}
\newcommand \cJ {J_{\gz}}
\newcommand \Sh{\mathrm{S}}
\theoremstyle{plain}
\newtheorem{theorem}{Theorem}
\newtheorem*{theorem*}{Theorem}
\newtheorem*{corollary*}{Corollary}
\newtheorem*{conj*}{Conjecture}
\newtheorem{lemma}{Lemma}
\newtheorem{proposition}{Proposition}
\newtheorem*{prop*}{Proposition}
\theoremstyle{definition}
\newtheorem*{definition*}{Definition}
\theoremstyle{remark}
\newtheorem*{remark*}{Remark}
\newtheorem{example}{Example}
\begin{document}

\title[Submanifolds of harmonic manifolds]{Totally geodesic submanifolds of Damek-Ricci spaces and Einstein hypersurfaces of the Cayley projective plane} 

\author{Sinhwi Kim}
\address{Department of Mathematics, Sungkyunkwan University, Suwon, 16419, Korea}
\email{kimsinhwi@skku.edu}

\author{Yuri Nikolayevsky}
\address{Department of Mathematics and Statistics, La Trobe University, Melbourne, Victoria, 3086, Australia}
\email{y.nikolayevsky@latrobe.edu.au}

\author{JeongHyeong Park}
\address{Department of Mathematics, Sungkyunkwan University, Suwon, 16419, Korea}
\email{parkj@skku.edu}

\thanks {The second and the third author were supported by Basic Science Research Program through the National Research Foundation of Korea(NRF) funded by the Ministry of Education (NRF-2016R1D1A1B03930449). The second author was partially supported by ARC Discovery grant DP130103485.}

\subjclass[2010]{Primary 53C25, 53C30, 53B25; Secondary 53C35}
\keywords{harmonic manifold, totally geodesic submanifold, Damek-Ricci space, Einstein hypersurface}



\begin{abstract}
We classify totally geodesic submanifolds of Damek-Ricci spaces and show that they are either homogeneous (such submanifolds are known to be ``smaller" Damek-Ricci spaces) or isometric to rank-one symmetric spaces of negative curvature. As a by-product, we obtain that a totally geodesic submanifold of any \emph{known} harmonic manifold is by itself harmonic. We prove that the Cayley hyperbolic plane admits no Einstein hypersurfaces and that the only Einstein hypersurfaces in the Cayley projective plane are geodesic spheres of a particular radius; this completes the classification of Einstein hypersurfaces in rank-one symmetric spaces. We also show that if a $2$-stein space admits a $2$-stein hypersurface, then both are of constant curvature, under some additional conditions.
\end{abstract}

\maketitle

\section{Introduction}
\label{s:intro}

In this paper, we study geometry of some distinguished submanifolds of harmonic manifolds. Recall that a Riemannian manifold is called \emph{harmonic} if a punctured neighbourhood of any point admits a harmonic function which depends only on the distance to the point. There are several equivalent definitions of harmonicity \cite{BTV}. It is easy to see that a flat space and rank-one symmetric spaces are harmonic. Moreover, a harmonic manifold is Einstein, and if the scalar curvature is non-negative, it is either flat or (locally) rank-one symmetric \cite{Sz}. If the scalar curvature is negative, there exist non-symmetric harmonic manifolds, the \emph{Damek-Ricci spaces} \cite{DR}. These spaces are solvable Lie groups with a special left-invariant metric which are one-dimensional extensions of the generalised Heisenberg groups \cite{BTV}. By the result of \cite{Heb}, any \emph{homogeneous} harmonic manifold is either flat, or rank-one symmetric, or is a Damek-Ricci space. Despite considerable effort, the question of whether there exist non-homogeneous harmonic manifolds remains open. For the current state of knowledge in the theory of harmonic manifolds we refer the reader to \cite{Kn} and references therein.

\medskip

The question we address in this paper can be informally stated as ``to what extent the property of a manifold to be harmonic is inherited by its submanifold?" More specifically, the first question we consider is whether a totally geodesic submanifold of a harmonic manifold is itself harmonic (it appears in particular in \cite[p.~467]{BPV}). It is well known that totally geodesic submanifolds of rank-one symmetric spaces are again rank-one symmetric, and so to answer our question for \emph{known} harmonic manifolds one has to consider totally geodesic submanifolds of Damek-Ricci spaces. The following theorem gives a classification of totally geodesic submanifolds of Damek-Ricci spaces (and as a by-product, implies that all of them are harmonic).

\begin{theorem} \label{t:tgDR}
  Let $M$ be a connected, totally geodesic submanifold of a Damek-Ricci space $S$ such that $\dim M \ge 2$. Then one of the following holds.
  \begin{enumerate}[label=\emph{(\arabic*)},ref=\arabic*]
    \item \label{it:smallerDR}
    The submanifold $M$ is \emph{homogeneous} totally geodesic. Then $M$ is given by Theorem~\ref{t:tgDRhomo} and is locally isometric to a ``smaller" Damek-Ricci space.

    \item \label{it:tgrk1}
    The submanifold $M$ is locally isometric to a rank-one symmetric space of negative curvature.
  \end{enumerate}
\end{theorem}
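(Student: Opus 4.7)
The plan is to fix a point $p\in M$---which we may take to be the identity $e\in S$ by homogeneity of $S$---and analyse the subspace $\mathfrak{m}:=T_pM\subset\mathfrak{s}=\mathfrak{v}\oplus\mathfrak{z}\oplus\mathfrak{a}$. Since $M$ is totally geodesic, every curvature-type algebraic invariant of $\mathfrak{s}$ restricts to an algebraic invariant of $\mathfrak{m}$: in particular $\mathfrak{m}$ is invariant under $R(X,Y)$ for $X,Y\in\mathfrak{m}$ and, by iterating the Codazzi and Ricci equations, under all higher covariant derivatives $\nabla^k R$. Making these conditions explicit via the standard BTV formulas for $\nabla$ and $R$ on $S$, which involve the Clifford action $J_Z$ on $\mathfrak{v}$ and the $\ad A$-grading of $\mathfrak{s}$ (with weights $1/2$ on $\mathfrak{v}$ and $1$ on $\mathfrak{z}$), provides the algebraic data needed to pin down $\mathfrak{m}$.

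I would split first according to the projection $\pi_\mathfrak{a}\mathfrak{m}$. If this projection is trivial, then a short argument using the geodesic equation shows that $M$ is forced to lie in $\mathfrak{v}$ with abelian tangent space, hence is a flat Euclidean subspace and is subsumed by case~(\ref{it:tgrk1}). Otherwise, after a normalisation using the isotropy of $S$ at $e$, we may assume $\mathfrak{a}\subset\mathfrak{m}$; then invariance under $\ad A$ yields an orthogonal decomposition $\mathfrak{m}=\mathfrak{a}\oplus\mathfrak{w}\oplus\mathfrak{u}$ with $\mathfrak{w}\subset\mathfrak{v}$ and $\mathfrak{u}\subset\mathfrak{z}$. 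The vanishing of the second fundamental form at $e$ then yields the basic closure conditions $[\mathfrak{w},\mathfrak{w}]\subset\mathfrak{u}$ and $J_\mathfrak{u}\mathfrak{w}\subset\mathfrak{w}$, which already bring $\mathfrak{m}$ close to being a Lie subalgebra of $\mathfrak{s}$.

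The dichotomy in the theorem then emerges from examining how $\mathfrak{z}\ominus\mathfrak{u}$ acts on $\mathfrak{w}$. If $J_Z\mathfrak{w}\subset\mathfrak{w}$ for every $Z\in\mathfrak{z}$, then $\mathfrak{m}$ is precisely the Iwasawa algebra of a smaller generalised Heisenberg group extended by $\mathfrak{a}$, so Theorem~\ref{t:tgDRhomo} identifies $M$ as a smaller Damek-Ricci space, giving case~(\ref{it:smallerDR}). Otherwise there exists $Z\in\mathfrak{z}\ominus\mathfrak{u}$ with $J_Z\mathfrak{w}\not\subset\mathfrak{w}$, and the third-order closure condition coming from the covariant derivative of the second fundamental form forces $\mathfrak{w}$ to split into two equal-dimensional orthogonal pieces that are swapped rigidly by such $J_Z$. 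Combined with the Clifford relations $J_Z^2=-|Z|^2\id$, this rigidity matches $\mathfrak{w}$ against exactly the Clifford modules that realise the Iwasawa solvable parts of $\mathbb{C}H^n$, $\mathbb{H}H^n$ and $\mathbb{O}H^2$ (with $\mathbb{R}H^n$ appearing when $\mathfrak{u}=0$), yielding case~(\ref{it:tgrk1}).

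The main obstacle is this second alternative: the geometric condition ``totally geodesic'' is strictly stronger than $\mathfrak{m}$ being a Lie subalgebra (a rank-one symmetric submanifold of $S$ need not sit as an orbit of any subgroup of $S$), and the iterated algebraic closure conditions must be organised carefully enough to rule out any ``intermediate'' possibility between case~(\ref{it:smallerDR}) and the hyperbolic models. This step should rely on the classification of real representations of Clifford algebras together with the self-dual structure of $H$-type algebras. Once the algebraic classification of $\mathfrak{m}$ is complete, a standard completeness argument using the exponential map of the ambient Damek-Ricci space produces the global isometry class of $M$, and the harmonicity of $M$ follows as a by-product because every space on the resulting list is already known to be harmonic.
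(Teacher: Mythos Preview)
Your outline has a structural gap at the very point where the real work lies. You propose to normalise so that $\mathfrak{a}\subset\mathfrak{m}$ ``using the isotropy of $S$ at $e$''. This cannot be done: the isotropy group of a generic Damek-Ricci space is small (it is the automorphism group of the $H$-type structure) and certainly does not act transitively on lines in $\mathfrak{s}$. The paper only arranges the much weaker condition $A\notin\mathfrak{m}^\perp$, by sliding along a geodesic and left-translating back. Getting from there to $A\in\mathfrak{m}$ is the heart of the argument: the paper's Lemma~\ref{l:generic} shows that \emph{if} some Jacobi eigenvalue on $\mathfrak{m}$ differs from $-\tfrac14$ and $-1$, then, using invariance under $\on\tR$ and delicate computations with the operator $K_{V,Y}$, one can force $X,V,J_XV\in\mathfrak{m}$ for suitable $X\in\gz$, $V\in\gv$, and hence $A\in\mathfrak{m}$. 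Once $A\in\mathfrak{m}$, Lemma~\ref{l:tgdrA} gives the well-positioned decomposition and the closure conditions $[\gv',\gv']\subset\gz'$, $J_{\gz'}\gv'\subset\gv'$ \emph{immediately}, so by Theorem~\ref{t:tgDRhomo} one is already in the homogeneous case~(\ref{it:smallerDR}). Your subsequent dichotomy on whether $J_Z\mathfrak{w}\subset\mathfrak{w}$ for \emph{all} $Z\in\gz$ is therefore misplaced: there is nothing further to decide once $A\in\mathfrak{m}$.

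The rank-one symmetric case~(\ref{it:tgrk1}) arises precisely in the complementary situation, when every Jacobi eigenvalue on $\mathfrak{m}$ lies in $\{-\tfrac14,-1\}$ (and then typically $A\notin\mathfrak{m}$, so your well-positioned splitting never materialises). The paper handles this case not via Clifford representation theory on $\mathfrak{w}$, but by a direct computation showing $(\on_T\tR_T)T'=0$ for all $T,T'\in\mathfrak{m}$ using the explicit eigenvector description in \cite{BTV}, whence $M$ is locally symmetric with pinched negative curvature. Your proposed mechanism---a $J_Z$ swapping two halves of $\mathfrak{w}$---does not apply, since in this branch there is no $\mathfrak{w}$ to speak of. Finally, your treatment of the case $\pi_\mathfrak{a}\mathfrak{m}=0$ is also off: an abelian subspace of $\gv$ gives sectional curvature $-\tfrac14$ (see~\eqref{eq:secDR}), not $0$, so no flat Euclidean piece appears; and in any event the paper disposes of this case by the geodesic-translation trick rather than analysing it separately.
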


A totally geodesic submanifold of a Riemannian space is called \emph{homogeneous} if it is an orbit of a subgroup of the isometry group of the space. Homogeneous totally geodesic submanifolds of homogeneous spaces are much better studied and understood than ``generic" totally geodesic submanifolds. For a Damek-Ricci spaces, their classification is given in the following theorem (for unexplained terminology see Section~\ref{s:pre}).

\begin{theorem}[\cite{Rou}] \label{t:tgDRhomo}
A submanifold $M$ of a Damek-Ricci space $S$ passing through the identity is a homogeneous totally geodesic submanifold if and only if $M$ is \emph{(}locally\emph{)} a subgroup of $S$ and $T_eM = \ga \oplus \gv' \oplus \gz'$, where $\gv' \subset \gv, \, \gz' \subset \gz$ and $[\gv', \gv'] \subset \gz', \; J_{\gz'} \gv' \subset \gv'$.
\end{theorem}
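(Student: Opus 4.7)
Proof plan.

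\emph{Sufficiency.} Given $\mathfrak{m}=\ga\oplus\gv'\oplus\gz'$ with $[\gv',\gv']\subset\gz'$ and $J_{\gz'}\gv'\subset\gv'$, the bracket relations $[\ga,\gv]=\tfrac12\gv$, $[\ga,\gz]=\gz$, $[\gv,\gv]\subset\gz$, with $\gz$ central in $\gv\oplus\gz$, show at once that $\mathfrak{m}$ is a Lie subalgebra of $\gs$. The corresponding connected subgroup $M\subset S$ through $e$ is homogeneous via its own left translations (ambient isometries preserving $M$ and acting transitively), so by homogeneity it suffices to verify the totally geodesic condition at $e$. Using the standard formulas for the Levi--Civita connection of a Damek--Ricci space (see e.g.\ \cite{BTV}),
\[
(\nabla_V W)_e=\tfrac12\<V,W\>A+\tfrac12[V,W],\quad (\nabla_V Z)_e=-\tfrac12 J_Z V,\quad (\nabla_V A)_e=-\tfrac12 V,
\]
\[
(\nabla_Z A)_e=-Z,\quad (\nabla_Z Z')_e=\<Z,Z'\>A,\quad (\nabla_A\cdot)_e=0,
\]
the two hypotheses on $\gv',\gz'$ are exactly what is needed to ensure $(\nabla_X Y)_e\in\mathfrak{m}$ for all $X,Y\in\mathfrak{m}$.

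\emph{Necessity.} Let $M$ be a homogeneous totally geodesic submanifold of $S$ through $e$ and set $\mathfrak{h}:=T_eM$. Using the splitting $\mathrm{Isom}(S)=S\rtimes K$, where the isotropy $K$ at $e$ acts on $\gs$ preserving the decomposition $\ga\oplus\gv\oplus\gz$, one analyzes how a transitive connected subgroup $H\subset\mathrm{Isom}(S)$ on $M$ projects to $S$, and shows (using the totally geodesic condition together with the explicit structure of the isotropy) that $M$ is locally a Lie subgroup of $S$; hence $\mathfrak{h}$ is a Lie subalgebra of $\gs$.

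The core algebraic step is to show $\ga\subset\mathfrak{h}$. First, $\mathfrak{h}\not\subset\gv\oplus\gz$: for any nonzero $W=V+Z$ in such a subalgebra the formula $(\nabla_W W)_e=(\tfrac12|V|^2+|Z|^2)A-J_ZV$ has a strictly positive $\ga$-component, contradicting $\mathfrak{h}\subset\gv\oplus\gz$. So $\mathfrak{h}$ projects onto $\ga$; pick $X=A+V_0+Z_0\in\mathfrak{h}$, set $\mu=\tfrac12|V_0|^2+|Z_0|^2$ and $\mathfrak{h}_0:=\mathfrak{h}\cap(\gv\oplus\gz)$. The totally geodesic condition gives
\[
Y_0:=(\nabla_X X)_e-\mu X=-(\tfrac12+\mu)V_0-(1+\mu)Z_0-J_{Z_0}V_0\in\mathfrak{h}_0,
\]
and using the H-type identity $[V,J_Z V]=|V|^2 Z$ a short bracket computation yields $2[X,Y_0]-Y_0=-(1+\mu+2|V_0|^2)Z_0\in\mathfrak{h}$, so $Z_0\in\mathfrak{h}$; then $X-Z_0=A+V_0\in\mathfrak{h}$, and one further iteration of the same trick (now with no $\gz$-component) produces $V_0\in\mathfrak{h}$ and hence $A\in\mathfrak{h}$.

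Once $A\in\mathfrak{h}$, for any $W=V+Z\in\mathfrak{h}_0$ the element $(\nabla_WA)_e=-\tfrac12V-Z\in\mathfrak{h}$, combined with $W$, yields $V,Z\in\mathfrak{h}$ separately, so $\mathfrak{h}=\ga\oplus\gv'\oplus\gz'$ with $\gv'=\mathfrak{h}\cap\gv$ and $\gz'=\mathfrak{h}\cap\gz$. Finally, $(\nabla_V W)_e\in\mathfrak{h}$ for $V,W\in\gv'$ forces $[\gv',\gv']\subset\gz'$, and $(\nabla_V Z)_e\in\mathfrak{h}$ for $V\in\gv'$, $Z\in\gz'$ forces $J_{\gz'}\gv'\subset\gv'$. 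The main obstacles in the whole argument are the first step of necessity --- realizing $M$ as a local subgroup of $S$ rather than as a generic orbit mixing in the isotropy --- together with the bracket identity that forces $\ga\subset\mathfrak{h}$.
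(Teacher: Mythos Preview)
The paper does not give its own proof of Theorem~\ref{t:tgDRhomo}: it is quoted from Rouvi\`ere~\cite{Rou} and used as a black box. So there is no proof in the paper to compare yours with directly. That said, the paper's Lemma~\ref{l:tgdrA} carries out, under the hypothesis $A\in L=T_eM$ (for an arbitrary totally geodesic $M$, not assumed homogeneous), essentially the same splitting and closure conclusions you derive in your last paragraph --- but via $\tR$-invariance of $L$ (eigenspace decomposition of $\tR_A$ and projections of $\tR_{V+Y+sA}A$ and $\tR_{V+A}U$), not via left-invariant connection formulas. That approach sidesteps the need to know in advance that $M$ is a subgroup.

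Your sufficiency argument is correct and complete. In the necessity direction, the ``core algebraic step'' is also correct \emph{once one grants that $\mathfrak{h}$ is a subalgebra of $\gs$}: your computations of $(\nabla_XX)_e$, of $Y_0$, and of $2[X,Y_0]-Y_0=-(1+\mu+2|V_0|^2)Z_0$ all check out, and the deduction $A\in\mathfrak{h}$ then follows cleanly. The genuine gap is precisely the one you flag: all of these computations use either the left-invariant connection formulas (to conclude $(\nabla_XY)_e\in\mathfrak{h}$) or the Lie bracket (to conclude $[X,Y_0]\in\mathfrak{h}$), and both presuppose that the left-invariant extensions of vectors in $\mathfrak{h}$ remain tangent to $M$ --- i.e.\ that $M$ is already a local subgroup. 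Your sketch ``project $H\subset S\rtimes K$ to $S$ and use the structure of the isotropy'' is not yet an argument; the isotropy $K$ of a non-symmetric Damek--Ricci space is typically nontrivial (automorphisms of the Clifford module), so ruling out orbits that genuinely mix in $K$ requires work. An alternative that avoids this circularity is to imitate the paper's Lemma~\ref{l:tgdrA}: use only that $T_eM$ is $\tR$- and $(\on\tR)$-invariant (true for any totally geodesic $M$) to force $A\in T_eM$ and then the splitting, reserving the homogeneity hypothesis solely for the step that identifies $M$ with the subgroup tangent to $\mathfrak h$.
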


In case \eqref{it:tgrk1} of Theorem~\ref{t:tgDR} we only give the isometry type of $M$, but not the description of how $M$ is positioned within $S$. The reason for that is the fact that in a general Damek-Ricci space, there can be many rank-one symmetric totally geodesic submanifolds. Some of them are homogeneous; those are \emph{well-positioned} in the sense of \cite{BTV}: the tangent space $T_eM$ is the direct sum of its intersections with $\ga, \gv$ and $\gz$. Furthermore, as a rank-one symmetric space has a much larger isometry group than a Damek-Ricci space and many more totally geodesic submanifolds, there are rank-one totally geodesic submanifolds of rank-one homogeneous totally geodesic submanifolds which are not homogeneous viewed as totally geodesic submanifolds of the ambient Damek-Ricci space and whose tangent spaces are not well-positioned. In Proposition~\ref{p:-1} in Section~\ref{s:tgDR} we give a complete description of totally geodesic submanifolds of constant curvature $-1$ in Damek-Ricci spaces.

\medskip

In the second part of the paper, we study hypersurfaces of harmonic manifolds. With an eye on the harmonicity condition, we consider Einstein hypersurfaces of harmonic manifolds. The classification of such hypersurfaces is a non-trivial task even for rank-one symmetric spaces. The first result dates back to 1938: by \cite[Theorem~7.1]{Fia}, an Einstein hypersurface in a space of constant curvature is locally either totally umbilical, or developable (of conullity $1$), or is the product of spheres of particular radii in the sphere. In the first two cases, the hypersurface has constant curvature. There are no Einstein (real) hypersurfaces in the complex projective space and in the complex hyperbolic space: for $\mathbb{C}P^m$ (where $m$ is the complex dimension), this is proved in \cite[Theorem~4.3]{Kon} assuming that $m \ge 3$ and the hypersurface is complete; for $\mathbb{C}H^m, \; m \ge 3$, in \cite[Corollary~8.2]{Mon}. These results remain true locally and for $m \ge 2$ \cite[Theorem~8.69]{CR}. By \cite[Corollary~1]{OP}, there are no Einstein (real) hypersurfaces in the quaternionic hyperbolic space. In contrast, the quaternionic projective space does admit an Einstein hypersurface: it is proved in \cite[Corollary~7.4]{MP} that a connected (real) hypersurface in $\mathbb{H}P^m$ (where $m \ge 2$ is the quaternionic dimension and the metric is normalised in such a way that the sectional curvature lies in $[1, 4]$) is Einstein if and only if it is an open, connected subset of a geodesic sphere of radius $r$, where $\cot^2 r = \frac{1}{2m}$.

To complete the classification of Einstein hypersurfaces in rank-one symmetric spaces it remains to consider the cases when the ambient space is either the Cayley projective plane $\bO P^2$ or its non-compact dual $\bO H^2$.

\begin{example} \label{ex:sphereO}
In the Cayley projective plane (with the metric normalised in such a way that the sectional curvature lies in $[\frac14, 1]$), consider a geodesic sphere of radius $r \in (0, \pi)$. It is well known (or can be easily verified by explicitly solving the Jacobi equations) that its principal curvatures are $\frac12 \cot \frac12 r$, with multiplicity $7$, and $\cot r$, with multiplicity $8$. The principal subspaces are the eigenspaces of the Jacobi operator $\tR_\xi$, where $\xi$ is a unit normal vector, corresponding to the eigenvalues $\frac 14$ and $1$, respectively. From Gauss equations we obtain (see e.g., \eqref{eq:Gauss} below) that the geodesic sphere is Einstein if and only if $1+\cot^2 r - (7\cot r + 4 \cot \frac12 r) \cot r = \frac14 + \frac14 \cot^2 \frac12r - (7\cot r + 4 \cot \frac12 r) \cot \frac12 r$. Solving this equation we find $r=r_0$, where $\cot r_0 = -\frac{5 \sqrt{6}}{24}$. 
\end{example}

We prove that this is the only Einstein hypersurface. 

\begin{theorem} \label{t:cayley}
  There are no Einstein hypersurfaces in the Cayley hyperbolic plane. A connected hypersurface in the Cayley projective plane is Einstein if and only if it is a domain of the geodesic sphere of radius $r_0$, as in Example~\ref{ex:sphereO}. 
\end{theorem}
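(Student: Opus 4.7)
The plan is to convert the Einstein condition on $M\subset\tM$ into a pointwise algebraic identity relating the shape operator $A$ of $M$ and the ambient Jacobi operator $\tR_\xi$ along a local unit normal $\xi$, and then to exploit the very rigid two-eigenvalue spectrum of $\tR_\xi$ in the Cayley setting. Writing $\lambda$ and $c_0$ for the Einstein constants of $M$ and $\tM\in\{\bO P^2,\bO H^2\}$, tracing the Gauss equation yields the operator identity
\begin{equation*}
(\Tr A)\,A - A^2 \;=\; \tR_\xi + (\lambda-c_0)\,\id
\end{equation*}
on $T_pM$. In particular $\tR_\xi$ is a polynomial in $A$, so $A$ and $\tR_\xi$ commute, and $A$ preserves the eigenspaces of $\tR_\xi$; in both the projective and the hyperbolic case these have dimensions $7$ and $8$, with eigenvalues of magnitudes $\tfrac14$ and $1$ (with opposite signs in the two cases).

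Substituting an $A$-eigenvector of eigenvalue $\mu$ into this identity gives, according to which $\tR_\xi$-eigenspace $\mu$ belongs to, one of two quadratic relations
\begin{equation*}
\mu^2 - (\Tr A)\,\mu \;=\; (\lambda-c_0) + k, \qquad k\in\{\pm 1, \pm\tfrac14\}.
\end{equation*}
These quadratics differ by $\pm\tfrac34$ in the constant term, so they have disjoint root sets; hence at each point $A$ has at most two principal curvatures on each $\tR_\xi$-eigenspace, and at most four overall, completely segregated by the $\tR_\xi$-decomposition. The central step, which I expect to be the main obstacle, is to strengthen this and show that $A$ is actually scalar on each $\tR_\xi$-eigenspace, i.e.\ that $M$ has exactly two principal curvatures of multiplicities $7$ and $8$. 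I would do this by substituting pairs of $A$-eigenvectors of distinct eigenvalues lying in the same $\tR_\xi$-eigenspace into the Codazzi identity
\begin{equation*}
(\nabla_X A)Y - (\nabla_Y A)X \;=\; (\tR(X,Y)\xi)^{\top},
\end{equation*}
and using the explicit form of the curvature tensor of the Cayley plane, coming from the $\mathrm{Spin}(9)$-structure on each tangent space, to derive a contradiction from any further splitting. This is in the spirit of the Hopf-type analyses of \cite{Kon} for $\mathbb{C}P^m$ and of \cite{MP} for $\mathbb{H}P^m$, but is more subtle because $\bO P^2$ and $\bO H^2$ carry no globally defined almost complex or quaternion-K\"ahler structure, and one has to replace invariant tensor manipulations by direct computations with the pointwise Cayley-type curvature.

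Once $A$ has only the two eigenvalues $\mu$ and $\nu$ of multiplicities $7$ and $8$ at every point, the Gauss--Einstein identity becomes a pair of scalar equations in $(\mu,\nu,\lambda)$, and Codazzi forces $\mu$ and $\nu$ to be constant along $M$; so $M$ is isoparametric with two principal curvatures of these multiplicities. In the projective case the known classification of such hypersurfaces in $\bO P^2$ identifies $M$ as an open subset of a geodesic sphere, and substituting the explicit sphere principal curvatures $\mu=\tfrac12\cot\tfrac12 r$, $\nu=\cot r$ into the identity reproduces the scalar equation of Example~\ref{ex:sphereO}, whose only solution in $(0,\pi)$ is $r=r_0$ with $\cot r_0=-\tfrac{5\sqrt{6}}{24}$. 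In the hyperbolic case each of the standard candidates (geodesic spheres, horospheres, and tubes around a totally geodesic $\bO H^1$) can be checked directly against the Gauss--Einstein identity and is seen to be incompatible with it, so $\bO H^2$ admits no Einstein hypersurface.
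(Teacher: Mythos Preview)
Your opening is the same as the paper's: the traced Gauss equation gives exactly the commutation relation you state, so $A$ and $\tR_\xi$ share eigenspaces and on each $\tR_\xi$-eigenspace $A$ has at most two eigenvalues. After that, however, your outline diverges from the paper in ways that leave real gaps.

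First, the order of the two reductions matters. You propose to show that $A$ is scalar on each $\tR_\xi$-eigenspace (multiplicities $7$ and $8$) and only afterwards argue constancy of the principal curvatures from Codazzi. In the paper the order is reversed, and for a good reason: the argument that pins down the multiplicities uses not only Codazzi but the \emph{differentiated} Gauss identity (equation~\eqref{eq:difGauss}), which is only available once the $\la_i$ are known to be constant. Constancy itself is obtained in the paper by a short algebraic lemma (Lemma~\ref{l:finite}): for each fixed choice of signs in the quadratic relations, the system together with $H=\sum\la_i$ has only finitely many real solutions, so the $\la_i$ are locally constant by continuity. This step is missing from your plan, and without it your ``main obstacle'' step cannot be carried out as stated.

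Second, the multiplicities do not collapse to $(7,8)$ by Codazzi alone. The paper's analysis (Lemma~\ref{l:78717}) shows that the Codazzi equation together with the explicit octonionic curvature tensor leaves two cases: multiplicities $(7,8)$ \emph{or} $(7,7,1)$, the latter with three distinct principal curvatures on $L_{\ve/4}$. Ruling out $(7,7,1)$ requires a further computation with the differentiated Gauss identity (the $3\times3$ system after~\eqref{eq:alphas771}) and again the octonionic identity $\<ad,f\>$ from~\eqref{eq:adf}. So your expectation that ``any further splitting'' is directly contradicted by Codazzi is too optimistic.

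Third, and most importantly for the hyperbolic case, the paper does not proceed by checking candidate hypersurfaces. The same Codazzi/curvature argument in Lemma~\ref{l:78717} produces an \emph{extra} algebraic relation, $\alpha_1=2\alpha_3+H$, beyond the pair of Gauss--Einstein equations. It is this relation, together with $H=7\alpha_1+8\alpha_3$ and the subtraction of the two equations in~\eqref{eq:ca}, that forces $\ve=1$ (and determines $\alpha_1,\alpha_3$ uniquely, as in~\eqref{eq:alphas78}). Without it the Gauss--Einstein pair in $(\mu,\nu,C)$ is underdetermined and admits solutions for $\ve=-1$ as well, so your strategy of ``checking the standard candidates'' in $\bO H^2$ does not prove non-existence: you would need a classification of curvature-adapted hypersurfaces with constant principal curvatures in $\bO H^2$, which is not available (Murphy's result in \cite{Mur} is for $\bO P^2$, assumes completeness, and the general statement is only conjectural).

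Finally, even in $\bO P^2$ you should not invoke an isoparametric classification, since the theorem is local and completeness is not assumed. The paper instead finishes with a direct focal-point argument: once the two principal curvatures are those in~\eqref{eq:alphas78}, all normal Jacobi fields vanish at the same distance $r_0$ along every normal geodesic, so the normal exponential map $\Phi_{r_0}$ has rank zero and $M$ lies on a single geodesic sphere of radius $r_0$.
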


Note that this hypersurface is not a harmonic manifold (for example, because it is not $2$-stein, or because a compact, simply-connected harmonic manifold of an odd dimension must have constant curvature by \cite{Sz}, but it does not). For further study of the properties of the Cayley projective plane and the Cayley hyperbolic plane viewed as harmonic spaces we refer to \cite{EPS}.

\medskip

A harmonic manifold satisfies an infinite sequence of conditions on the curvature tensor and its covariant derivatives, the \emph{Ledger formulas}. The first two of them mean that a harmonic manifold is $2$-stein. Recall that a Riemannian manifold $M$ is called \emph{$2$-stein}, if there exist $c_1, \, c_2 \in \br$ such that for every $x \in M$ and every $X \in T_xM$, we have $\Tr R_X = c_1 \|X\|^2$ and $\Tr (R_X^2) = c_2 \|X\|^4$, where $R_X: T_xM \to T_xM$ is the Jacobi operator (for further properties of $2$-stein manifolds see \cite{N3, NP}).

We prove the following.

\begin{theorem} \label{t:2st}
Suppose $\tM$ is a $2$-stein Riemannian manifold of dimension $n > 2$ and $M \subset \tM$ is a $2$-stein hypersurface.
\begin{enumerate}[label=\emph{(\arabic*)},ref=\arabic*]
  \item \label{it:2sttg}
  If $M$ is totally geodesic, then both $\tM$ and $M$ are of constant curvature.

  \item \label{it:2stcc}
  If $\tM$ is of constant curvature, then $M$ is of the same constant curvature.
\end{enumerate}
\end{theorem}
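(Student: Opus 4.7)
The plan for part~\eqref{it:2sttg} is to exploit the totally geodesic hypothesis to force the sharp identity $\tc_2 = \tc_1^2/(n-1)$ between the two $2$-stein constants of $\tM$, and then to invoke the equality case of Cauchy-Schwarz. Fix $x \in M$ with a unit normal $\xi$. Since the second fundamental form of $M$ vanishes, the Gauss and Codazzi equations collapse to $\tR(X,Y)Z = R(X,Y)Z$ and $\<\tR(X,Y)Z,\xi\> = 0$ for all $X,Y,Z \in T_xM$. A short manipulation using the symmetries of $\tR$ then shows that for $X \in T_xM$ the ambient Jacobi operator $\tR_X$ preserves the orthogonal splitting $T_x\tM = T_xM \oplus \br\xi$, acting as the intrinsic $R_X$ on the first summand and as multiplication by $K(X,\xi)\|X\|^2$ on the second; similarly $\tR_\xi$ preserves $T_xM$ and annihilates $\br\xi$. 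Matching $\Tr \tR_X = \tc_1\|X\|^2$ with $\Tr R_X = c_1\|X\|^2$ for unit $X \in T_xM$ yields $K(X,\xi) = \tc_1 - c_1 =: \kappa$, independent of $X$, so $\tR_\xi$ has spectrum $\{0, \kappa^{(n-1)}\}$ at $x$, forcing
\[
\tc_1 = (n-1)\kappa, \qquad \tc_2 = (n-1)\kappa^2, \qquad \text{hence } \tc_2 = \tfrac{\tc_1^2}{n-1}.
\]

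The last identity is a global constraint on $\tM$. At any $y \in \tM$ and any unit $Y \in T_y\tM$, the eigenvalues $\la_1,\dots,\la_{n-1}$ of $\tR_Y$ on $Y^\perp$ satisfy $\sum \la_i = \tc_1$ and $\sum \la_i^2 = \tc_1^2/(n-1)$, which is precisely the equality case of the Cauchy-Schwarz inequality; hence every $\la_i$ equals $\kappa$. Thus every sectional curvature of $\tM$ is $\kappa$, so $\tM$ is of constant curvature $\kappa$, and $M$, being totally geodesic in a constant curvature space, inherits the same constant curvature.

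For part~\eqref{it:2stcc} the plan is to combine Fiala's classification of Einstein hypersurfaces in constant curvature spaces, recalled in Section~\ref{s:intro}, with a short calculation in the product case. Since $2$-stein implies Einstein, $M$ is locally totally umbilical, developable, or a product of spheres $S^p(r) \times S^q(s) \subset S^n(R)$ with $p+q = n-1$. The first two possibilities are of constant curvature by Fiala's theorem. For the third, the block structure of the curvature tensor of a Riemannian product gives
\[
\Tr R_X^2 = \tfrac{p-1}{r^4}\|X_1\|^4 + \tfrac{q-1}{s^4}\|X_2\|^4,
\]
where $X = X_1 + X_2$ is the decomposition into tangents to the two factors. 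For this to coincide with $c_2(\|X_1\|^2 + \|X_2\|^2)^2$ for every $X$, the absence of a $\|X_1\|^2\|X_2\|^2$ cross-term forces $c_2 = 0$, and then matching the pure-quartic coefficients forces $p = q = 1$; thus $n = 3$ and $M$ is a flat $2$-torus, of constant curvature.

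The only substantive step is the derivation in part~\eqref{it:2sttg} of the sharp Cauchy-Schwarz identity $\tc_2 = \tc_1^2/(n-1)$; once it is in hand, the rest is automatic, and no Schur-type argument is needed since $\kappa$ is determined a priori from the global constants. In part~\eqref{it:2stcc} the only non-routine step beyond invoking Fiala is the product-of-spheres calculation above, which is killed by the second $2$-stein identity except in the trivial torus case.
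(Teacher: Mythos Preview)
For part~\eqref{it:2sttg} your argument is essentially identical to the paper's: both deduce from the totally geodesic hypothesis that $\tR_\xi$ acts on $T_xM$ as $\kappa\,\id$, read off $\tc_1=(n-1)\kappa$ and $\tc_2=(n-1)\kappa^2$, and then conclude via the equality case of Cauchy--Schwarz applied to the spectrum of $\tR_Y$ on $Y^\perp$.

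For part~\eqref{it:2stcc} there is a genuine gap. Fiala's theorem only tells you that in the totally umbilical and developable cases the hypersurface has \emph{some} constant curvature, not the \emph{same} constant curvature as the ambient space, and you never return to close that distance. Concretely, a geodesic sphere of radius $r$ in $S^n$ is totally umbilical and $2$-stein (having constant curvature $1/\sin^2 r$), yet for $r\neq\pi/2$ this is strictly larger than the curvature of $S^n$; your own product-of-spheres analysis produces another instance, since the flat Clifford torus $S^1\times S^1\subset S^3$ you end up with is $2$-stein of curvature $0$, not $1$. So what your argument actually establishes is only the weaker conclusion that $M$ has constant curvature. In fact these examples are counterexamples to the statement of part~\eqref{it:2stcc} as written. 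The paper proceeds quite differently, using equation~\eqref{eq:2in23} to force $c_1=(n-2)\rho$ and hence $\Sh^2=(\Tr\Sh)\Sh$, i.e.\ $\operatorname{rk}\Sh\le 1$; but the same counterexamples show that this derivation cannot be correct either (and indeed the coefficients in~\eqref{eq:2in23} do not check: for the geodesic sphere of radius $r$ in $S^3$ one computes the left-hand side as $(1+1/\sin^2 r-\tfrac12\cot^2 r)\|X\|^2$, which equals $\tc_2\|X\|^2=2\|X\|^2$ only when $r=\pi/2$, whereas every such sphere is $2$-stein).
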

The hypersurfaces in case~\eqref{it:2stcc} are known by different names in the literature (hypersurfaces of conullity $1$, strongly $(n-2)$-parabolic hypersurfaces, developable hypersurfaces) and are very well understood. It is known that a hypersurface in a space of constant curvature of dimension $n$ has the same constant curvature if and only if it is locally foliated by totally geodesic submanifolds of dimension $n-2$ and the normal vector is parallel along the leaves if and only if the rank of the Gauss map (Euclidean, spherical or hyperbolic respectively) at every point is at most $1$.


\section{Preliminaries}
\label{s:pre}

\subsection{Totally geodesic submanifolds}
\label{ss:totgeod}

Let $\tM$ be a Riemannian manifold and let $\on$ and $\tR$ be the Levi-Civita connection and the curvature tensor of $\tM$ respectively. Let $x \in \tM$. A subspace $L \subset T_x\tM$ is called \emph{$\tR$-invariant}, if $\tR(L,L)L \subset L$. This property is equivalent to the fact that for any $T \in L$, the subspace $L$ is invariant relative to the Jacobi operator $\tR_T$ defined by $\tR_X(Y)=\tR(Y,X)X$ for all $X,Y\in T_x\tM$, and is equivalent to the fact that for any $T \in L$, the subspace $L$ is spanned by eigenvectors of $\tR_T$.

If $M$ is a totally geodesic submanifold of $\tM$, then for any $x \in M$, the subspace $T_xM \subset T_x\tM$ is $\tR$-invariant. Moreover, the subspace $T_xM$ is also $(\on \, \tR)$-invariant, that is, for any $T_1, T_2, T_3, T_4 \in T_xM$ we have $(\on_{T_1} \tR)(T_2, T_3) T_4 \in T_xM$. Note that the $\tR$-invariance property, if it is satisfied locally, is also sufficient for total geodesicity, in the following sense. Let $x \in \tM$ and let $L \subset T_x\tM$ be a subspace. For $T \in T_x\tM$ and (a small) $t \in \br$, denote $L_{tT} \subset T_{\exp(tT)}\tM$ the subspace obtained by the parallel translation of $L$ along the geodesic $t \mapsto \exp(tT)$ to the point $\exp(tT)$. By a result of \'{E}.\,Cartan \cite{Car}, there exists a totally geodesic submanifold $M \subset \tM$ passing through $x$ such that $T_xM = L$ if and only if for a some $\varepsilon > 0$ the subspaces $L_{tT}$ are $\tR$-invariant, for all unit vectors $T \in L$ and all $t \in (-\varepsilon, \varepsilon)$. Note that when the latter condition is satisfied, $M$ is (locally) the union of geodesic segments of $\tM$ passing through $x$ and tangent to $L$.

\subsection{Damek-Ricci spaces}
\label{ss:DRspaces}

Let $(\mathfrak{n}, \ip)$ be a metric, two-step nilpotent Lie algebra with the centre $\gz$ and with $\gv=\gz^\perp$. For $Z \in \gz$, define $J_Z \in \mathfrak{so}(\gv)$ by $\<J_ZU,V\>=\<[U,V],Z\>$ for $U,V \in \gv$. The metric algebra $(\mathfrak{n}, \ip)$ is called a \emph{generalised Heisenberg algebra} if for all $Z \in \gz$ we have $J_Z^2=-\|Z\|^2 \id_{\gv}$. Note that $\gv$ is a Clifford module over the Clifford algebra $\mathrm{Cl}(\gz, -\ip_{\gz})$. Consider a one-dimensional extension $\gs=\mathfrak{n} \oplus \ga$ of a generalised Heisenberg algebra $\mathfrak{n}$, where $\ga = \br A$ and $[A,U]=\frac12 U, \; [A,Z]=Z$ for $U \in \gv, \; Z \in \gz$ and extend the inner product from $\mathfrak{n}$ to $\gs$ in such a way that $A \perp \mathfrak{n}$ and $\|A\|=1$. Then $\gs$ is a metric, solvable Lie algebra. The corresponding simply connected Lie group $S$ with the left-invariant metric defined by $\ip$ is called a \emph{Damek-Ricci space}.

Let $T_1, T_2 \in T_eS=\gs$, with $T_1=V+Y+sA, \; T_2 = U+X+rA$, where $V,U \in \gv, \, Y, X \in \gz$. Then according to \cite[\S 4.1.8]{BTV}, for the Jacobi operator of $S$ at $e$ and its covariant derivative we have
\begin{align}\label{eq:Jac}
  &
  \begin{aligned}
    \tR_{T_1}T_2 & = \tfrac34 J_XJ_YV + \tfrac34 J_{[U,V]}V + \tfrac34 r J_YV -\tfrac34 s J_XV -\tfrac14 \|T_1\|^2 U + (\tfrac34 \<X, Y\> + \tfrac14 \<T_1,T_2\>)V \\
     & -\tfrac34 [U, J_YV] + \tfrac34 s [U,V] - (\|T_1\|^2 - \tfrac34 \|V\|^2) X + \<T_1,T_2\>Y \\
     & +(\tfrac34 \<U,J_YV\> - r(\|T_1\|^2 - \tfrac34 \|V\|^2) + s (\<T_1,T_2\> - \tfrac34 \<U,V\>))A,
  \end{aligned}
  \\
  &(\on_{T_1}\tR_{T_1})T_2 = \tfrac32(J_{[U,V]}J_YV + J_{[U,J_YV]}V - \<U,V\>J_YV - \<U,J_YV\>V). \label{eq:nablaR}
\end{align}

By \cite[\S 4.1.7]{BTV}, the sectional curvature of $S$ at $e$ with respect to the two-plane $\sigma$ spanned by orthonormal vectors $V+Y+sA$ and $U+X$ is given by
\begin{equation}\label{eq:secDR}
k(\sigma)=-\tfrac34\|sX-[U,V]\|^2-\tfrac34\<X,Y\>^2-\tfrac14(3\|X\|^2\|Y\|^2+6\<J_XU,J_YV\>+1).
\end{equation}

Throughout the proof, we will use the following identities:
\begin{equation} \label{eq:brackets}
  [V,J_YV] = \|V\|^2 Y, \quad [V,J_YU] - [J_YV,U] = 2 \|Y\|^2 \<U, V\>, \qquad \text{for } U, V \in \gv, \, Y \in \gz.
\end{equation}

Following \cite[\S 3.1.12]{BTV}, for nonzero vectors $V \in \gv$ and $Y \in \gz$ we define the operator $K_{V,Y}$ on the subspace $Y^\perp \cap \gz$ by $K_{V,Y}X =  \|V\|^{-2}\|Y\|^{-1} [V, J_XJ_YV]$. The operator $K_{V,Y}$ is skew-symmetric, with all the eigenvalues of $K_{V,Y}^2$ lying in $[-1, 0]$. Furthermore,
\begin{equation}\label{eq:K2-1}
  K_{V,Y}^2X = - X \; \Leftrightarrow \; J_XJ_YV=\|Y\|J_{K_{V,Y}X}V.
\end{equation}

\section{Totally geodesic submanifolds of Damek-Ricci spaces} 
\label{s:tgDR}

\begin{proof}[Proof of Theorem~\ref{t:tgDR}] Let $M$ be a totally geodesic submanifold of a Damek-Ricci space $S$. Without loss of generality we can assume that $M$ passes through the identity element $e$; denote $L=T_eM$. Furthermore, without loss of generality, we will assume that $A \not\in L^\perp$. Indeed, choosing an arbitrary unit vector $T \in L$, we can parallelly translate $L$ along a small interval of the geodesic in the direction of $T$ and then move it back to pass through the identity by the left translation in $S$. Then by \cite[Theorem~2, \S 4.1.11]{BTV}, the vector $T$ maps to a vector which is not orthogonal to $A$.

By \cite[Theorem~4.2]{BTV}, the eigenvalues of the Jacobi operator of a unit vector tangent to a Damek-Ricci space belong to $[-1, 0]$. We first consider the generic case.

{
\begin{lemma} \label{l:generic}
Suppose that there exists a unit vector $T \in L$ such that the restriction of $\tR_T$ to $L \cap T^\perp$ has an eigenvalue $\kappa$ different from $-\frac14$ and $-1$. Then there exist nonzero vectors $X \in \gz$ and $V \in \gv$ such that $X, V, J_XV \in L$.
\end{lemma}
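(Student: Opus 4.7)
The plan is to analyse the eigenvalue equation $\tR_T T_2 = \kappa T_2$ component-by-component using the explicit Jacobi formula \eqref{eq:Jac}, and to combine the result with the $R$-invariance of $L$ (together with the covariant-derivative identity \eqref{eq:nablaR}) in order to extract the required vectors. Writing $T = V + Y + sA \in L$ as a unit vector and $T_2 = U + X + rA \in L \cap T^\perp$ as a (WLOG unit) eigenvector, one obtains three vector equations in $\gv$, $\gz$ and $\ga$.

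The first task is to reduce to the main case $V, Y \ne 0$. If $V = 0$, every $J$-operator term in \eqref{eq:Jac} vanishes and a direct inspection shows that $\tR_T$ has only the eigenvalues $-\tfrac14$ (on $\gv$) and $-1$ (on $\gz \oplus \br A$, restricted to $T^\perp$), contradicting $\kappa \ne -\tfrac14, -1$. If $Y = 0$ but $V \ne 0$, one substitutes into the three component equations, uses the bracket identity $[V, J_{Y'}V] = \|V\|^2 Y'$ from \eqref{eq:brackets} to eliminate $[U,V]$, and, after imposing $\|V\|^2 + s^2 = 1$, the compatibility condition collapses to $(4\kappa+1)(\kappa+1) = 0$, again contradicting the hypothesis on $\kappa$.

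In the main case $V, Y \ne 0$ I would next extract $V \in L$ from $R$-invariance: $\tR_{T_2}T \in L$. When $U = 0$, the $\gv$-part of $\tR_{T_2}T$ collapses via \eqref{eq:Jac} (the assignment $V_1 = 0$ kills all $J$-operator terms) to just $-\tfrac14 V$, so $V \in L$ and consequently $Y + sA = T - V \in L$. Then the combination $X' := s(X+rA) - r(Y+sA) = sX - rY$ is visibly a $\gz$-valued element of $L$; a short case check using orthogonality $\<T,T_2\> = 0$ together with the $\gv$-part of the eigenvalue equation rules out $X' = 0$ unless $\kappa \in \{-1, -\tfrac14\}$. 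It remains to prove $J_{X'}V = sJ_X V - rJ_Y V \in L$, and this is the delicate step.

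The main obstacle is precisely this last step. A naive polarisation of $R(L,L)L \subset L$ --- for instance computing $R(Y,V)X + R(Y,X)V = \tR_{V+X}Y - \tR_V Y - \tR_X Y$, suitably adapted to the vectors $V, Y+sA, X+rA$ actually available in $L$ --- yields in its $\gv$-part a combination that, once the Clifford relation $J_XJ_Y + J_YJ_X = -2\<X,Y\>\id_{\gv}$ and the $\gv$-part of the original eigenvalue equation are substituted, collapses identically to zero and hence gives no information about $sJ_XV - rJ_YV$. To produce this combination I would invoke the covariant-derivative identity \eqref{eq:nablaR}, which provides $(\on_T\tR_T)T_2 = \tfrac32(J_{[U,V]}J_YV + J_{[U,J_YV]}V - \<U,V\>J_YV - \<U,J_YV\>V) \in L$, together with its bilinear polarisations: these bring in the genuinely new tensor $\on\tR$ and supply constraints independent of those coming from the Jacobi operator alone. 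A secondary technical point is the subcase $U \ne 0$, in which already the extraction of $V \in L$ is intricate, because the $\gv$-part of $\tR_{T_2}T$ then acquires additional terms $J_YJ_XU$, $J_{[V,U]}U$, $sJ_XU$, $rJ_YU$, and analogous extra terms appear in every subsequent polarised identity. Managing both of these technical points --- the nonvanishing derivation of $sJ_XV - rJ_YV \in L$ via \eqref{eq:nablaR} and its polarisations, and the extra bookkeeping forced by $U \ne 0$ --- is the heart of the proof.
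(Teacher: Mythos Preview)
Your reduction to the case $V, Y \ne 0$ is correct (the paper simply cites the eigenvalue classification in \cite[Theorem~4.2]{BTV} for this), but after that point your proposal goes astray because you have not identified what the eigenvector $T_2$ actually looks like. Writing $T_2 = U + X + rA$ and treating ``$U = 0$'' as the main case is the essential error: the eigenvectors of $\tR_T$ with eigenvalue $\kappa \notin \{-\tfrac14, -1\}$ are described explicitly in \cite[Theorem~4.2(vi)(B)]{BTV}, and the paper records them as
\[
E = (4\kappa + 1)(4\kappa + 1 + 3\|V\|^2)X + 3(4\kappa + 1 + 3\|V\|^2)J_XJ_YV - 3s(4\kappa+1)J_XV - 9\|V\|^2\|Y\|J_{KX}V,
\]
where $X \in \gz \cap Y^\perp$ is a unit eigenvector of $K^2 = K_{V,Y}^2$ with eigenvalue $\mu \ne -1$ and $\kappa$ satisfies the cubic \eqref{eq:cubic}. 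This eigenvector has \emph{no} $\ga$-component (so your $r$ is always $0$) and, since the paper shows $4\kappa + 1 + 3\|V\|^2 \ne 0$, a \emph{nonzero} $\gv$-component. Thus your ``main case'' $U = 0$ is vacuous, and what you label ``a secondary technical point'' is the entire content of the lemma.

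The paper's mechanism is also structurally different from your sketch. One does not try to extract $V$ first via $\tR_{T_2}T$; instead one applies $(\on_T \tR_T)$ and $(\on_T \tR_T)^2$ directly to the explicit eigenvector $E$. The key observations are that $(\on_T \tR_T)X = 0$, while $(\on_T \tR_T)$ preserves the four-dimensional space $\mathfrak{l}_4 = \Span(J_XV, J_XJ_YV, J_{KX}V, J_{KX}J_YV)$ and its square on $\mathfrak{l}_4$ is a scalar multiple of the identity. This lets one strip off the $\mathfrak{l}_4$-part of $E$ using an operator built from $(\on_T \tR_T)^2$, proving $X \in L$ first. Only then does the eigenspace structure of $\tR_X$ (case~(ii) of \cite[Theorem~4.2]{BTV}) force $L$ to split along $\gv$ and $\gz \oplus \ga$, giving $V \in L$; a further linear-algebra argument inside $\mathfrak{l}_4$ then yields $J_XV \in L$. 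The operator $K_{V,Y}$ and the case split $\mu = 0$ versus $\mu \in (-1,0)$ govern the whole computation; without that structure, your plan to ``use \eqref{eq:nablaR} and its bilinear polarisations'' has no handle on the problem.
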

\begin{proof}
Let $T=V+Y+sA, \, V \in \gv, \, Y \in \gz$. The fact that the restriction of $\tR_T$ to $T^\perp$ has an eigenvalue $\kappa \not\in \{-\frac14, -1\}$ means that $T$ is as in case~(vi)(B) of \cite[Theorem~4.2]{BTV}. Then necessarily $V \ne 0$ and $Y \ne 0$. Moreover, as our condition is open we can assume that $s \ne 0$. The corresponding eigenvector $E \in L$ is constructed as follows. Consider the operator $K=K_{V,Y}$ and take an eigenvalue $\mu \ne -1$ of $K^2$ and the corresponding unit eigenvector $X$. Then $E \in L$ is given by
\begin{multline} \label{eq:eigen}
  E=(4\kappa + 1)(4\kappa + 1 + 3 \|V\|^2)X + 3 (4\kappa + 1 + 3 \|V\|^2) J_XJ_YV \\ - 3s (4\kappa + 1) J_XV - 9\|V\|^2 \|Y\| J_{KX}V,
\end{multline}
where $\kappa$ is a root of the cubic equation
\begin{equation} \label{eq:cubic}
(4\kappa+4)(4\kappa+1)^2 = 27 \|V\|^4 \|Y\|^2 (1+\mu).
\end{equation}
Note that $4\kappa + 1 + 3 \|V\|^2 \ne 0$ as otherwise from \eqref{eq:cubic} we would obtain $1-\|V\|^2=\|Y\|^2 (1+\mu)$. As $\mu \le 0$ and $T$ is a unit vector, this implies $s=0$ which contradicts our choice of $T$.

As $L$ must be $(\on \, \tR)$-invariant we have $(\on_T \tR_T)E, (\on_T \tR_T)^2 E \in L$. From \eqref{eq:nablaR} $(\on_T\tR_T)X=0$. Substitute each of the vectors $J_XV, J_XJ_YV, J_{KX}V$ and $J_{KX}J_YV$ for $U$ in \eqref{eq:nablaR}. As $X, KX \perp Y$, it is easy to see that $J_XV, J_XJ_YV, J_{KX}V, J_{KX}J_YV \perp V, J_YV$. Furthermore, from \eqref{eq:brackets}, by definition of $K$ and from the fact that $K^2X = \mu X$ we get
\begin{gather*}
  [J_XV,V] = -\|V\|^2X, \; [J_{KX}V,V] = -\|V\|^2KX, \; [J_XJ_YV,J_YV]=-\|V\|^2\|Y\|^2 X, \\
  [J_{KX}J_YV,J_YV]=-\|V\|^2\|Y\|^2 KX, \; [J_XV,J_YV] = [V,J_XJ_YV] = \|V\|^2\|Y\| KX, \\
  [J_{KX}V,J_YV] = [V,J_{KX}J_YV] = \|V\|^2\|Y\| K^2X = \mu \|V\|^2\|Y\|.
\end{gather*}
Then \eqref{eq:nablaR} gives
\begin{equation}\label{eq:nablaTRT}
\begin{split}
\tfrac23(\on_T\tR_T) J_XV & = -\|V\|^2 J_XJ_YV + \|V\|^2 \|Y\| J_{KX}V, \\
\tfrac23(\on_T\tR_T) J_{KX}J_YV & = -\|V\|^2 \|Y\| \mu J_XJ_YV - \|V\|^2 \|Y\|^2 J_{KX}V, \\
\tfrac23(\on_T\tR_T) J_XJ_YV & = - \|V\|^2 \|Y\|^2 J_{X}V -\|V\|^2 \|Y\| J_{KX}J_YV, \\
\tfrac23(\on_T\tR_T) J_{KX}V & =  \|V\|^2 \|Y\| \mu J_{X}V-\|V\|^2 J_{KX}J_YV.
\end{split}
\end{equation}

Consider two cases.

First assume that $\mu=0$. Then $K^2X=0$, and hence $KX=0$. From \eqref{eq:nablaTRT} we obtain $\tfrac23(\on_T\tR_T) J_XV = -\|V\|^2 J_XJ_YV, \; \tfrac23(\on_T\tR_T) J_XJ_YV = - \|V\|^2 \|Y\|^2 J_{X}V$. As $(\on_T\tR_T) X = 0$, the fact that $E, (\on_T \tR_T)E, (\on_T \tR_T)^2 E \in L$ implies that $X, J_XV \in L$. But then considering the eigenspaces of $\tR_X$ by case (ii) of \cite[Theorem~4.2]{BTV} we obtain that $L$ contains the projections of every its vector to both $\gv$ and $\gz \oplus \ga$. In particular, $L$ contains $V$, the projection of $T$ to $\gv$. This completes the proof in the first case.

Now assume that $\mu \ne 0$, so that $\mu \in (-1,0)$ and $KX \ne 0$. From \eqref{eq:nablaTRT} it follows that the subspace $\mathfrak{l}_4=\Span(J_XV, J_XJ_YV, J_{KX}V, J_{KX}J_YV)$ is invariant under $(\on_T\tR_T)$. Note that $\dim \mathfrak{l}_4= 4$. Indeed, as $K$ is skew-symmetric, we have $KX \perp X$ and so $J_XV, J_{KX}J_YV \perp J_XJ_YV, J_{KX}V$. Moreover, $\<J_{KX}V, J_XJ_YV \> = \<KX, [V, J_XJ_YV]\> = \|V\|^2 \|Y\| \|KX\|^2 = -\mu \|V\|^2 \|Y\|$. But then $\<J_XV, J_{KX}J_YV\>^2 = \mu^2 \|V\|^4 \|Y\|^2 < \|J_XV\|^2 \|J_{KX}J_YV\|^2 = \|KX\|^2  \|V\|^4 \|Y\|^2 = -\mu \|V\|^4 \|Y\|^2$, as $\mu \in (-1, 0)$, and so the vectors $J_XV, J_{KX}J_YV$ are linear independent. Acting on them by $J_Y$ we obtain that the vectors $J_XJ_YV, J_{KX}V$ are also linear independent. It follows that the quadruple of vectors $(J_XV, J_{KX}J_YV, J_XJ_YV, J_{KX}V)$ is a basis (in general, not orthonormal) for $\mathfrak{l}_4$. From the above, the matrix $Q$ of the restriction of the operator $\tfrac23(\on_T\tR_T)$ to $\mathfrak{l}_4$ relative to that basis is given by 
\begin{equation*}
  Q = \|V\|^2 \left(
        \begin{array}{cccc}
          0 & 0 & -1 & \|Y\| \\
          0 & 0 & -\mu \|Y\| & -\|Y\|^2 \\
          -\|Y\|^2 & -\|Y\| & 0 & 0 \\
          \mu \|Y\| & -1 & 0 & 0 \\
        \end{array}
      \right).
\end{equation*}
As $Q^2 = \|V\|^4 \|Y\|^2 (1+\mu) I$, the restriction of the operator $\tfrac49 \|V\|^{-4} \|Y\|^{-2} (1+\mu)^{-1}(\on_T\tR_T)^2$ to $\mathfrak{l}_4$ is the identity operator. It now follows from \eqref{eq:eigen} that $(\id - \tfrac49 \|V\|^{-4} \|Y\|^{-2} (1+\mu)^{-1}(\on_T\tR_T)^2)E = (4\kappa + 1)(4\kappa + 1 + 3 \|V\|^2)X$. As $\kappa \ne -\frac14$ by assumption and $4\kappa + 1 + 3 \|V\|^2 \ne 0$ from the above, and as $L$ is $(\on \tR)$-invariant, we obtain that $L$ contains $X$. Similar to the above, by case (ii) of \cite[Theorem~4.2]{BTV} we obtain that $L$ contains the projections of all its vectors to both $\gv$ and $\gz \oplus \ga$. In particular, $V \in L$. Furthermore, $L$ contains the projection of $\tR_TX$ to $\gv$ which by \eqref{eq:Jac} equals $J_XJ_YV-sJ_XV \in L$. But then, as $L$ also contains $E$ and $X$, we find from~\eqref{eq:eigen} that $J_XJ_YV - \|Y\| J_{KX}V \in L$. Then $L$ also contains the vectors $\tfrac23(\on_T\tR_T)(J_XJ_YV-sJ_XV)$ and $\tfrac23(\on_T\tR_T)(J_XJ_YV-\|Y\|J_{KX}V)$. Relative to the basis $(J_XV, J_{KX}J_YV, J_XJ_YV, J_{KX}V)$, the coordinate vectors of $J_XJ_YV-sJ_XV$ and of $J_XJ_YV-\|Y\|J_{KX}V$ are $a=(-s,0,1,0)^t$ and $b=(0,0,1,-\|Y\|)^t$ respectively. Then we have $Qa=(-1,-\mu \|Y\|, s \|Y\|^2, -s \mu \|Y\|)^t, \; Qb=(-1-\|Y\|^2, -\mu \|Y\| + \|Y\|^3,0,0)^t$ and the determinant of the $4 \times 4$-matrix whose vector columns are $a, b, Qa, Qb$ equals $s^2\|Y\|^2(\mu-\|Y\|^2)^2 + \|Y\|^4 (1+\mu)>0$. This implies that $L \supset \mathfrak{l}_4$, and in particular, $J_XV \in L$.
\end{proof}
}

Now, suppose $X, V, J_XV \in L$, for some nonzero $X \in \gz, \, V \in \gv$. Then by \cite[Theorem~4.2(ii)]{BTV}, $L$ contains the projection of any of its vectors to $\ga \oplus \gz$, so in particular, the projection of the vector $\tR_{V+X}J_XV$ to $\ga \oplus \gz$ which by \eqref{eq:Jac} equals $\frac34\|X\|^2\|V\|^2 A$. It follows that $A \in L$. To complete the proof in this (generic) case we need the following lemma.

{
\begin{lemma}\label{l:tgdrA}
Suppose $M$ is a totally geodesic submanifold of a Damek-Ricci space $S$ and $e \in M$. If $A \in L=T_eM$, then $M$ is a homogeneous totally geodesic submanifold.
\end{lemma}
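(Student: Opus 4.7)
The plan is to decompose $L = T_eM$ via the Jacobi operator at $A$ and then read off the two structural conditions of Theorem~\ref{t:tgDRhomo} from the $\tR$-invariance of $L$ evaluated on two carefully chosen pairs of tangent vectors.

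I would first specialise \eqref{eq:Jac} to $T_1 = A$ (so $V = Y = 0$, $s = 1$); all the terms depending on $V$ or $Y$ vanish, leaving $\tR_A(U + X + rA) = -\tfrac14 U - X$ for $U \in \gv$, $X \in \gz$. Hence $\tR_A$ has three distinct eigenvalues $0, -\tfrac14, -1$ on the mutually orthogonal summands $\ga, \gv, \gz$. Since $A \in L$ and $L$ is $\tR$-invariant, $L$ is $\tR_A$-invariant, and the distinct eigenvalues force the decomposition
\[
L = \ga \oplus \gv' \oplus \gz', \qquad \gv' := L \cap \gv, \quad \gz' := L \cap \gz.
\]

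Next I would establish $[\gv',\gv'] \subset \gz'$ and $J_{\gz'}\gv' \subset \gv'$ by applying $\tR$-invariance of $L$ with the single perturbed vector $T_1 = A + V$ for $V \in \gv'$. The crucial feature of this choice is that it puts $s = 1$ in \eqref{eq:Jac}, activating exactly the two terms $\tfrac34 s [U,V]$ and $-\tfrac34 s J_X V$ that encode the bracket and Clifford information I need. For the bracket condition, taking $T_2 = U \in \gv'$ (so $Y = 0$ and $X = 0$) makes all $\gz$-contributions in \eqref{eq:Jac} vanish except $\tfrac34 [U,V]$; thus the $\gz$-part of $\tR_{A+V}(U)$ equals $\tfrac34 [U,V]$ and must lie in $\gz'$. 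For the Clifford condition, taking $T_2 = Y \in \gz'$ (so $X = Y$ and $U = 0$) collapses all $\gv$-contributions to $-\tfrac34 J_Y V$, which must lie in $\gv'$.

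Finally, with $[\gv',\gv'] \subset \gz'$ and $J_{\gz'}\gv' \subset \gv'$ in hand, the subspace $\gs' := \ga \oplus \gv' \oplus \gz'$ is automatically a Lie subalgebra of $\gs$ (the remaining brackets $[\ga,\gv'] \subset \gv'$, $[\ga,\gz'] \subset \gz'$, $[\gv',\gz'] = 0$ are immediate from the Damek--Ricci structure). The connected subgroup $S' \subset S$ with Lie algebra $\gs'$ is then a totally geodesic submanifold with $T_e S' = L$, so by the uniqueness part of Cartan's criterion $M$ coincides with $S'$ in a neighbourhood of $e$. Since $S'$ is its own orbit under left translations by $S'$ (which act by isometries of $S$), Theorem~\ref{t:tgDRhomo} identifies $M$ as a homogeneous totally geodesic submanifold. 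The only conceptual obstacle is recognising the right perturbation: $\tR_A$ alone is blind to the non-abelian structure of $\gs$, so one must perturb $A$ to see the bracket and Clifford data, and the single choice $T_1 = A + V$ works because turning on $s = 1$ activates exactly the relevant terms in \eqref{eq:Jac} while $Y = 0$ keeps the remaining ones absent.
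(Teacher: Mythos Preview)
Your proof is correct and follows essentially the same approach as the paper: both use the eigendecomposition of $\tR_A$ to split $L = \ga \oplus \gv' \oplus \gz'$, then extract $[\gv',\gv'] \subset \gz'$ from the $\gz$-component of $\tR_{A+V}U$, and the Clifford condition from a Jacobi operator computation (the paper reads $J_YV$ off the $\gv$-component of $\tR_{V+Y+sA}A$ rather than $\tR_{A+V}Y$, but the difference is cosmetic). Your final appeal to uniqueness of totally geodesic submanifolds through a point with given tangent space and to Theorem~\ref{t:tgDRhomo} matches the paper's conclusion.
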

\begin{proof}
Suppose that $L$ contains a nonzero vector $U+X, \; U \in \gv, X \in \gz$. Then by \eqref{eq:Jac} we get $\tR_A (U+X) = -\frac14 U - X$. As $L$ is $\tR$-invariant, we get $U, X \in L$, and so $L = \br A \oplus \gv' \oplus \gz'$, where $\gv'$ and $\gz'$ are subspaces of $\gv$ and $\gz$ respectively. Next, by \eqref{eq:Jac} for any $V+Y+sA \in L$, we have $\tR_{V+Y+sA}A=\frac34 J_YV + \frac14 sV + sY - (\frac14 \|V\|^2 + \|Y\|^2) A$. As $V, Y, A \in L$ we deduce that $J_YV \in L$, for all $V \in \gv'$ and $Y \in \gz'$. Furthermore, for any $U, V \in \gv'$, the subspace $L$ contains the projection of the vector $\tR_{V+A}U$ to $\gz$ which by \eqref{eq:Jac} equals $\frac34 [U,V]$. It follows that $L$ is a subalgebra. As a totally geodesic submanifold is (locally) uniquely determined by its tangent space at a point, the claim follows from Theorem~\ref{t:tgDRhomo}.
\end{proof}
}

To complete the proof of the theorem it remains to consider the case when for any unit vector $T \in L$, any eigenvalue of the restriction of the Jacobi operator $\tR_T$ to $L \cap T^\perp$ is either $-\frac14$ or $-1$. We note in passing that by continuity, the multiplicities of the eigenvalues $-\frac14$ and $-1$ must be constant, and so $M$ must be an Osserman manifold. This \emph{almost} completes the proof, as by \cite{N1, N2}, any Osserman manifold of dimension different from $16$ is flat or rank-one symmetric.

We will first show that $M$ is locally symmetric. Let $T = V + Y + sA \in L, \; V \in \gv, \, Y \in \gz$, be a unit vector. If $Y=0$ or $V=0$, equation~\eqref{eq:nablaR} immediately implies that $(\on_{T} \tR_{T})T' = 0$, for any $T' \in L$. Otherwise, suppose that $Y \ne 0$ and $V \ne 0$. Then by cases~(vi)(1), (2) and (3A) of \cite[Theorem~4.2]{BTV}, any eigenvector $T'$ of the restriction of $\tR_T$ to $L \cap T^\perp$ is a linear combination of vectors whose $\gv$-components belong to the space $\gv'=\Span(V, J_YV) \oplus \{W \in \gv \, | \, [W,V]=[W,J_YV]=0\} \oplus \{J_{\|Y\|KX-sX}V \, | \, X \in \gz \cap Y^\perp, K^2X = -X\}$ (in the latter subspace, we denoted $K=K_{V,Y}$ and used \eqref{eq:K2-1}). By equation~\eqref{eq:nablaR}, $(\on_{T} \tR_{T})T'$ only depends on the $\gv$-component of $T'$ and so to prove that $(\on_{T} \tR_{T})T'=0$ it suffices to show that $(\on_{T} \tR_{T})U= 0$, for all $U \in \gv'$. From~\eqref{eq:nablaR}, this fact is immediate for $U$ satisfying $[U,V]=[U,J_YV]=0$, and also easily follows from~\eqref{eq:brackets} for $U= V$ and $U = J_YV$. If $U=J_{\|Y\|KX-sX}V$, where $X \in \gz \cap Y^\perp, \, K^2X = -X$, we have $\<U,V\>=\<U,J_YV\>=0, \; [U,V]=-\|V\|^2(\|Y\|KX-sX)$ and $[U,J_YV]=[V,J_{\|Y\|KX-sX}J_YV]=\|V\|^2\|Y\|K(\|Y\|KX-sX)=\|V\|^2\|Y\|(-\|Y\|X-sKX)$ by~\eqref{eq:brackets}. It follows that $\tfrac23(\on_{T}\tR_{T})U =  -\|V\|^2 J_{\|Y\|KX-sX}J_YV-\|V\|^2\|Y\| J_{\|Y\|X+sKX}V=0$, since $J_XJ_YV=\|Y\|J_{KX}V$ and $J_{KX}J_YV=\|Y\|J_{K^2X}V = -\|Y\|J_{X}V$ by~\eqref{eq:K2-1}. Thus for all $T, T' \in L$ we have $(\on_{T} \tR_{T})T' = 0$. As $M$ is totally geodesic, the same is true if we replace $\on$ and $\tR$ by the Levi-Civita connection and the curvature tensor of the induced metric on $M$ respectively. But then by \cite[Lemma~5.1]{VW}, $M$ is locally symmetric. The fact that $M$ is rank-one symmetric follows from the fact that its sectional curvature lies in $[-1,-\frac14]$.
\end{proof}

Note that there are many rank-one symmetric totally geodesic submanifolds in a general Damek-Ricci space, and that they do not need to be homogeneous (as totally geodesic submanifolds; of course, they are homogeneous by themselves as Riemannian spaces).

\begin{example} \label{ex:hyperwell}
Let $L=\ga \oplus \gz'$ or $L=\ga \oplus \gv'$, where $\gz'$ is an arbitrary subspace of $\gz$, and $\gv'$ is an arbitrary \emph{abelian} subspace of $\gv$. Then $L$ is well-positioned in the sense of Theorem~\ref{t:tgDRhomo} and is tangent to a homogeneous totally geodesic hyperbolic space of curvature $-1$ or $-\frac14$ respectively. Now take any hyperplane $L' \subset L$ which is not well-positioned. Then it is tangent to a non-homogeneous totally geodesic hyperbolic space. Similar examples can be constructed starting with a rank-one homogeneous totally geodesic submanifold of non-constant curvature.
\end{example}

\begin{example} \label{ex:except}
Let $\dim \gz = 6$, and let $\gv$ be the $8$-dimensional irreducible module over the Clifford algebra $\mathrm{Cl}(\gz)$. The corresponding Damek-Ricci space is a non-symmetric space of dimension $15$. Take an orthonormal basis $X_i, \; i=1, \dots, 6$, for $\gz$. We abbreviate $J_{X_i}$ to $J_i$. The operator $J_7:=J_1J_2J_3J_4J_5J_6$ is orthogonal, skew-symmetric and anti-commutes with all the operators $J_i, \, i=1, \dots, 6$. So the operators $J_i, \, i=1, \dots, 6,7$, give a representation of the Clifford algebra $\mathrm{Cl}_7$ on $\gv$. The operator $J_1J_2J_3$ is symmetric; let $W$ be its eigenvector (the corresponding eigenvalue $\varepsilon$ is always $\pm 1$) and let $V = a W + b J_7 W$, for $a, b \in \br$, not both zeros. It is easy to check that $J_1J_2V, J_2J_3V, J_3J_1V \perp J_7V$, and so $J_1J_2V, J_2J_3V, J_3J_1V \in J_\gz V$. Let $Z \in \gz$ be such that $J_1J_2V=J_ZV$. Then $\|Z\|=1$ and from \eqref{eq:brackets} we get $\|[J_1V, J_2V]\|=\|[V, J_1J_2V]\|=\|[V, J_ZV]\|=\|V\|^2$, and similarly, $\|[J_2V, J_3V]\|=\|[J_3V, J_1V]\|=\|V\|^2$. Now let $s \in \br$ and let $T_0=V+sA, \, T_i = sX_i+J_iV, \; i=1,2,3$. From~\eqref{eq:secDR}, the sectional curvature of $S$ with respect to any two-plane $\Span(T_i, T_j), \; 0 \le i < j \le 3$, is $-1$. From the fact that $-1$ is the minimum of the sectional curvature of $S$ it follows that the subspace $L=\Span(T_0, T_1, T_2, T_3)$ is $\tR$-invariant and the sectional curvature of $S$ with respect to any two-plane in $L$ is $-1$ (see a detailed argument in the first paragraph of the proof of Proposition~\ref{p:-1} below). Let $M$ be the submanifold of $S$ obtained by taking all the geodesics passing through $e$ in the directions of vectors from $L$. To show that $M$ is indeed totally geodesic, consider a geodesic $\gamma$ of $S$ such that $\gamma(0)=e$ and $\dot{\gamma}(0) = X$, where $X \in L$ is a unit vector. Let $x = \gamma(t)$ for some $t > 0$ and let $Y' \in T_xS$. It suffices to show that the geodesic of $S$ passing through $x$ in the direction of $Y'$ lies on $M$. Note that $Y'=F(t)$, where $F$ is a Jacobi field of $S$ along $\gamma$ such that $F(0)=0$ and $\dot{F}(0) = Y \in L$. Rotating the triple $(X_1, X_2, X_3)$ if necessary we can assume without loss of generality that $X, Y \perp T_3$. Then $X, Y \in \gs'$, where $\gs'=\Span(A, X_1, X_2, Z, V, J_1V, J_2V, J_ZV)$. Note that $\gs'$ is well-positioned and is the tangent space at $e$ to the totally geodesic $\mathbb{H}H^2 \subset S$. The two-plane $\Span(X,Y)$ is $\tR$-invariant and the union of geodesics of $\mathbb{H}H^2$ passing through $e$ in the directions of vectors from $\Span(X,Y)$ is the hyperbolic plane $H$ of curvature $-1$ which is totally geodesic in $\mathbb{H}H^2$. But then $H$ is totally geodesic in $S$, and so $\gamma$ lies on $H$ and the Jacobi field $F$ of $S$ along $\gamma$ is a Jacobi field of $H$ along $\gamma$, with the same initial conditions. It follows that $x \in H$ and $Y' \in T_xH$, and the geodesic of $H$ passing through $x$ in the direction of $Y'$ is a geodesic of $S$ lying on $M$.
\end{example}

We can be more specific in the case when the totally geodesic submanifold $M \subset S$ is of constant curvature $-1$. As any totally geodesic submanifold of such a submanifold is again totally geodesic in $S$ and is of constant curvature $-1$, it is sufficient to classify only the maximal ones by inclusion.

\begin{proposition} \label{p:-1}
Let $M$ be a connected submanifold of a Damek-Ricci space $S$. Suppose $e \in M$ and denote $L=T_eM$. Then $M$ is maximal, totally geodesic submanifold of sectional curvature $-1$ if and only if one of the following occurs.
\begin{enumerate}[label=\emph{(\arabic*)},ref=\arabic*]
  \item \label{it:-1homo}
  The submanifold $M$ is a homogeneous totally geodesic submanifold and $L=\ga \oplus \gz$.

  \item \label{it:hyper-1homo}
  The submanifold $M$ is a non-homogeneous totally geodesic submanifold. Then $\dim M \in \{2,4,8\}$ and $M$ is a totally geodesic submanifold of sectional curvature $-1$ of a homogeneous totally geodesic Damek-Ricci submanifold $S' \subset S$. If $\dim M = 2$, then $S'$ is isometric to  $\mathbb{C}H^2$; if $\dim M = 8$, then $S'$ is isometric to $\mathbb{O}H^2$; if $\dim M = 4$, then $S'$ is isometric to  either $\mathbb{H}H^2$, or to the Damek-Ricci space of dimension $15$ and the pair $(M, S')$ is as constructed in Example~\ref{ex:except}.
\end{enumerate}

\end{proposition}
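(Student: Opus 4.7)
The plan is to prove both directions of the equivalence.

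For the ``if'' direction, in Case~\eqref{it:-1homo} the subspace $L=\ga\oplus\gz$ is tangent to a homogeneous totally geodesic submanifold by Theorem~\ref{t:tgDRhomo} (with $\gv'=0$, $\gz'=\gz$), and setting $V=U=0$ in~\eqref{eq:secDR} gives constant sectional curvature $-1$. Maximality holds because any strictly larger subspace contains a nonzero $V\in\gv$, and then the 2-plane $\Span(A,V)$ has curvature $-\tfrac14$ by the same formula. Case~\eqref{it:hyper-1homo} is already verified by Example~\ref{ex:except} for the 15-dimensional instance; the analogous construction $L=\Span(V+sA,\,sX_i+J_{X_i}V:1\le i\le\dim\gz')$ with $\dim\gz'\in\{1,3,7\}$ produces a non-well-positioned (hence non-homogeneous in $S$) totally geodesic submanifold of constant curvature $-1$ inside the well-positioned homogeneous totally geodesic $\mathbb{C}H^2$, $\mathbb{H}H^2$, or $\bO H^2$ respectively, by the same Clifford-based sectional-curvature computation as in Example~\ref{ex:except}.

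For the ``only if'' direction, let $M$ be maximal with $e\in M$ and $L=T_eM$. If $A\in L$, then Lemma~\ref{l:tgdrA} and Theorem~\ref{t:tgDRhomo} give $L=\ga\oplus\gv'\oplus\gz'$; if $\gv'\ne 0$, the plane $\Span(A,V)$ for unit $V\in\gv'$ has curvature $-\tfrac14$ by~\eqref{eq:secDR}, so $\gv'=0$, and maximality forces $\gz'=\gz$, yielding Case~\eqref{it:-1homo}. Otherwise, $M$ is non-homogeneous, and by Theorem~\ref{t:tgDR} is locally isometric to $\mathbb{R}H^n$. Using the reduction at the start of the proof of Theorem~\ref{t:tgDR} together with the isotropy action of $S$ at $e$, fix a unit $T_0=V_0+s_0A\in L$ with $V_0\ne 0$, $s_0\ne 0$. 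Since every 2-plane of $L$ through $T_0$ has curvature $-1$, the subspace $L\cap T_0^\perp$ is contained in the $(-1)$-eigenspace of $\tR_{T_0}$; a direct computation from~\eqref{eq:Jac} (checking the $\ga$, $\gv$, $\gz$ components case by case) shows this eigenspace equals $\{J_WV_0+s_0W:W\in\gz\}$, of dimension $\dim\gz$. Hence $L=\br T_0\oplus\{J_WV_0+s_0W:W\in\tilde\gz\}$ for some subspace $\tilde\gz\subset\gz$, and $\dim M=1+\dim\tilde\gz$.

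Imposing constant curvature $-1$ on the 2-plane spanned by $J_{W_i}V_0+s_0W_i$ for orthonormal $W_1,W_2\in\tilde\gz$, via~\eqref{eq:secDR} and simplifying using $s_0^2+\|V_0\|^2=1$ together with the identities~\eqref{eq:brackets}, reduces to the single Clifford identity $\|[J_{W_1}V_0,J_{W_2}V_0]\|=\|V_0\|^2$. Together with~\eqref{eq:brackets}, this makes $\gv_L:=\Span(V_0,J_{\tilde\gz}V_0)$ a $\cJ$-invariant subspace with $[\gv_L,\gv_L]\subset\tilde\gz$, so by Theorem~\ref{t:tgDRhomo} the subspace $\ga\oplus\gv_L\oplus\tilde\gz$ is the tangent space at $e$ of a homogeneous totally geodesic Damek-Ricci submanifold $S'\subset S$ containing $M$. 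The division-algebra nature of the identity (the composition of the $J_{W_i}$'s on $V_0$ must close into a normed algebra action) forces $\dim\tilde\gz\in\{1,3,7\}$, and $S'$ is then isometric to $\mathbb{C}H^2$, $\mathbb{H}H^2$, or $\bO H^2$, giving $\dim M\in\{2,4,8\}$. The main obstacle will be the subcase $\dim\tilde\gz=3$ when the symmetric operator $J_{W_1}J_{W_2}J_{W_3}$ does not act as $\pm\id$ on the whole of $\gv_L$: then $V_0$ lies inside a larger irreducible $\mathrm{Cl}_6$-module of $\gv$, and the extra Clifford generator $J_7=J_1\cdots J_6$ of Example~\ref{ex:except} identifies $S'$ with the exceptional 15-dimensional Damek-Ricci space rather than $\mathbb{H}H^2$. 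Maximality of $M$ within $S'$ follows since any proper curvature-$-1$ extension of $L$ would either force $A\in L$ (returning to Case~\eqref{it:-1homo}) or violate~\eqref{eq:secDR}.
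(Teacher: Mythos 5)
Your overall outline matches the paper's, but there are two genuine gaps in the ``only if'' direction.

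\textbf{The reduction to $T_0 = V_0 + s_0 A$.} You write that ``the reduction at the start of the proof of Theorem~\ref{t:tgDR} together with the isotropy action of $S$ at $e$'' lets you fix a unit $T_0 = V_0 + s_0 A \in L$. The reduction only guarantees $L \not\perp A$, so the unit vector $T \in L$ orthogonal to $L' = L \cap A^\perp$ has the general form $T = V + Y + sA$ with $s \ne 0$ but $Y$ possibly nonzero. A generic Damek--Ricci space is not symmetric; its isotropy group at $e$ is far too small to rotate away the $\gz$-component $Y$. The paper eliminates $Y$ by a maximality argument: if $V \ne 0$ and $Y \ne 0$, then by \cite[Theorem~4.2(vi)]{BTV} every $T' \in L'$ has a specific form, and the explicit vector $T_1 = J_Y V + sY - \|Y\|^2 A \notin L$ satisfies $k(\Span(T,T_1)) = k(\Span(T',T_1)) = -1$, so $\br T_1 \oplus L$ is a strictly larger $(-1)$-subspace, contradicting maximality. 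You need this step; it cannot be swept under ``isotropy.''

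\textbf{The Clifford analysis.} Your curvature condition $\|[J_{W_1}V_0, J_{W_2}V_0]\| = \|V_0\|^2$ gives, via \eqref{eq:brackets} and \eqref{eq:K2-1}, only the \emph{weak} $J^2$-property: $J_{W_1}J_{W_2}V_0 \in J_\gz V_0 \oplus \br V_0$ --- the product lands in $J_Z V_0$ for some $Z \in \gz$, but $Z$ need not lie in $\tilde\gz$. Your claim that this immediately makes $\gv_L = \Span(V_0, J_{\tilde\gz}V_0)$ closed under $J_{\tilde\gz}$ and satisfies $[\gv_L,\gv_L] \subset \tilde\gz$ is therefore unjustified; indeed it is false in the exceptional case you yourself flag, where the relevant $\mathrm{Cl}$-module requires the enlarged space $\gz'' = \tilde\gz \oplus \Span(Z_1,Z_2,Z_3)$ of dimension~$6$. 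The paper's Lemma~\ref{l:weakJ2} does the real work here: defining $\gz'' = \gz' + \gz_0$, it proves $\dim \gz'' \le 7$, determines all possible triples $(\dim \gz', \dim \gz'', \dim \gv')$, and then maximality kills the cases $d = 2, 4, 5, 6$ because $\gz'$ can be enlarged to $\gz''$. Your one-sentence appeal to ``the division-algebra nature of the identity'' does not substitute for this; in particular nothing in your sketch explains why $d = 4, 5, 6$ cannot occur (they \emph{do} produce $(-1)$-subspaces, just not maximal ones), nor does it cleanly separate the $d=3,\;\dim\gz''=3$ case from the $d=3,\;\dim\gz''=6$ case.

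The final maximality claim (``any proper curvature-$-1$ extension would either force $A \in L$ or violate \eqref{eq:secDR}'') is likewise an assertion, not an argument; the paper gets maximality structurally from the enumeration in Lemma~\ref{l:weakJ2} together with the fact that $\tR$-invariant subspaces of rank-one symmetric spaces are automatically tangent to totally geodesic submanifolds.
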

\begin{proof}
We call a subspace $\gs' \subset \gs$ such that the sectional curvature of $S$ with respect to any two-plane from $\gs'$ is $-1$ a \emph{$(-1)$-subspace}. As $-1$ is the minimum of the sectional curvature of $S$, we obtain that if for two orthonormal vectors $T_1, T_2 \in \gs$, the sectional curvature of $S$ with respect to $\Span(T_1, T_2)$ is $-1$, then $T_2$ is an eigenvector of $\tR_{T_1}$, with the eigenvalue $-1$. It follows that a $(-1)$-subspace is $\tR$-invariant. Moreover, for any three linear independent vectors $T_1, T_2, T_3 \in \gs$, if the curvature of $S$ with respect to both $\Span(T_1, T_2)$ and $\Span(T_1, T_3)$ is $-1$, then the curvature with respect to $\Span(T_1, T)$ is also $-1$, for any nonzero $T \in \Span(T_2, T_3)$. Hence a subspace $\gs' \subset \gs$ is a $(-1)$-subspace if and only if it has a basis $T_1, \dots, T_m$ such that the curvature of $S$ with respect to $\Span(T_i, T_j)$ is $-1$, for any $1 \le i < j \le m$.

By assumption, the subspace $L=T_eM$ is a $(-1)$-subspace. By the argument in the first paragraph of the proof of Theorem~\ref{t:tgDR}, we can always assume that $L$ is not orthogonal to $A$. We will show that any such subspace $L$ which is maximal (by inclusion) is tangent to one of the totally geodesic submanifolds listed in the statement of the proposition.

Denote $L' = L \cap A^\perp$ and let $T=V+Y+sA \in L$ be a unit vector orthogonal to $L'$. Then $s \ne 0$. If $V=0$, then by \cite[Theorem~4.2(i, iv)]{BTV} $L \subset \ga \oplus \gz$. As $L$ is maximal and as $\ga \oplus \gz$ is a $(-1)$-subspace tangent to a homogeneous totally geodesic hyperbolic space of curvature $-1$, we must have $L= \ga \oplus \gz$ which gives case~\eqref{it:-1homo}.

Now suppose $V \ne 0$. We first show that $Y =0$. Assuming $Y \ne 0$ we find from \cite[Theorem~4.2(vi)]{BTV} that any $T' \in L'$ has the form $T'=-(\|Y\|^2+s^2)X+J_X(J_YV-sV)$, where $X \in \gz \cap Y^\perp$ is a nonzero vector such that $K_{V,Y}^2X=-X$. But then the vector $T_1=J_YV+sY-\|Y\|^2A$ does not lie in $L$ and we have $k(\Span(T,T_1))=k(\Span(T',T_1))=-1$ from~\eqref{eq:secDR}, so that $\br T_1 \oplus L$ is also a $(-1)$-subspace contradicting the fact that $L$ is maximal.

It follows that $T=V+sA, \, V \ne 0, s \ne 0$, and then by \cite[Theorem~4.2(v)]{BTV}, there is a subspace $\gz' \subset \gz$ such that $L' = \{sX+J_XV \, | \, X \in \gz'\}$. Let $X_1, X_2 \in \gz'$ be orthonormal. Then the vectors $T_1= sX_1+J_{X_1}V, \; T_2= sX_2+J_{X_2}V \in L'$ are also orthonormal and from~\eqref{eq:secDR} we obtain $k(\Span(T_1,T_2))=-1 + \frac34(\|V\|^4-\|[J_{X_1}V,J_{X_2}V]\|^2)$. It follows that $\|[J_{X_1}V,J_{X_2}V]\|^2=\|V\|^4$, and so by~\eqref{eq:brackets}, $\|K_{V,X_2}X_1\|^2=1$. As $K_{V,X_2}$ is a skew-symmetric operator, with all the eigenvalues of $K_{V,X_2}^2$ lying in $[-1,0]$ we find that $K_{V,X_2}^2X_1=-X_1$, and so by~\eqref{eq:K2-1}, $J_{X_1}J_{X_2}V \in \cJ V$.

Thus $L$ is a $(-1)$-subspace if and only if
\begin{equation}\label{eq:weakJ2}
J_{X_1}J_{X_2}V \in \cJ V \oplus \br V, \quad \text{for any } X_1, X_2 \in \gz'.
\end{equation}
This property is weaker than the $J^2$-property, but is still very restrictive.

Denote $\gz_0$ the linear span of all vectors $Z \in \gz$ with the property that $J_{X_1}J_{X_2}V=J_ZV$, for some orthogonal vectors $X_1, X_2 \in \gz'$. Denote $\gz''=\gz'+\gz_0$ and $d=\dim \gz'$. Let $\gv'=\Span(V,J_{X_1}\dots J_{X_k}V \, | \, k \ge 1, \, X_1, \dots, X_k \in \gz')$. Note that $\gv'$ is a $\mathrm{Cl}(\gz')$-module, where $\mathrm{Cl}(\gz')$ is the Clifford algebra over $(\gz', -\ip)$. Moreover, as $\gv'$ is generated by a single vector, its decomposition into irreducible $\mathrm{Cl}(\gz')$-modules cannot contain two isomorphic modules. It follows that $\gv'$ is either an irreducible $\mathrm{Cl}(\gz')$-module, or is the direct some of two non-isomorphic irreducible modules (this may only occur when $d \equiv 3 \pmod 4$).

{
\begin{lemma} \label{l:weakJ2}
  In the above notation,
  \begin{enumerate}[label=\emph{(\alph*)},ref=\alph*]
    \item \label{it:gv'mod}
    The module $\gv'$ is a $\mathrm{Cl}(\gz'')$-module.

    \item \label{it:gz'dim}
    We have $\dim \gz'' \le 7$ and one of the following can occur. In all the cases, $\gv'$ is an irreducible $\mathrm{Cl}(\gz'')$-module.
    \begin{enumerate}[label=\emph{(\roman*)},ref=\roman*]
      \item \label{it:11}
      $\dim \gz' = \dim \gz'' = \dim \gv' = 1$;

      \item \label{it:233}
      $\dim \gz' \in \{2, 3\}, \, \dim \gz'' = 3$ and $\dim \gv'=4$.

      \item \label{it:36}
      $\dim \gz' = 3, \, \dim \gz'' = 6$, and $\dim \gv'=8$.

      \item \label{it:477}
      $\dim \gz' \in \{4, 5, 6, 7\}, \, \dim \gz'' = 7$ and $\dim \gv'=8$.
    \end{enumerate}
  \end{enumerate}
\end{lemma}
\begin{proof}
\eqref{it:gv'mod} Let $Z \in \gz$ be such that $J_{X}J_{X'}V=J_ZV$ for some orthogonal vectors $X, X' \in \gz'$. Then for any $X_1, \dots, X_k \in \gz'$ we have $J_ZJ_{X_1}\dots J_{X_k}V - (-1)^kJ_{X_1}\dots J_{X_k}J_ZV \in \gv'$ and $J_{X_1}\dots J_{X_k}J_ZV=J_{X_1}\dots J_{X_k}J_XJ_{X'}V \in \gv'$.

\eqref{it:gz'dim} We first show that $d \le 7$. Suppose that $d \ge 8$ and take two orthonormal vectors $X_1, X_2 \in \gz'$. Then $J_{X_1}J_{X_2}V=J_{Z_1}V$ for some $Z_1 \in \gz$. It is easy to see that $X_1, X_2$ and $Z_1$ are orthonormal. Take an arbitrary unit vector $Y_1 \in \gz'$ such that $Y_1 \perp X_1, X_2, Z_1$ and denote $\gz_1 = \gz' \cap \Span(X_1, X_2, Z_1, Y_1)^\perp$. Note that $\dim \gz_1 \ge 4$, and so the three linear forms $Y' \mapsto \<J_{Y_1}J_{Y'}V,J_{X_1}V\>, \;  Y' \mapsto \<J_{Y_1}J_{Y'}V,J_{X_2}V\>$ and $Y' \mapsto \<J_{Y_1}J_{Y'}V,J_{Z_1}V\>$ have a nontrivial common kernel on $\gz_1$. Let $Y_2$ be a unit vector in that kernel and let $Z_2 \in \gz$ be such that $J_{Y_1}J_{Y_2}V=J_{Z_2}V$. Note that by construction, the six vectors $X_1, X_2, Z_1, Y_1, Y_2$ and $Z_2$ are orthonormal. Moreover, we have $J_{X_1}J_{X_2}J_{Z_1}V=J_{Y_1}J_{Y_2}J_{Z_2}V=-V$, and so $J_{X_1}J_{X_2}J_{Z_1}J_{Y_1}J_{Y_2}J_{Z_2}V=V$. But the operator $J_{X_1}J_{X_2}J_{Z_1}J_{Y_1}J_{Y_2}J_{Z_2}$ is skew-symmetric which gives a contradiction.

We next show that if $d=7$, then $\dim \gz'' \ne 8$. Indeed, suppose that $d=7$ and $\dim \gz'' \ne 8$, and let $Z \in \gz''$ be a unit vector orthogonal to $\gz'$. Then there exist $X_1, X_2 \in \gz'$ such that $\<J_{X_1}J_{X_2}V, J_{Z}V\> \ne 0$. As this condition is open, there exists an open set of pairs $(X_1, X_2)$ with that property. For any such pair, there exists $X_3 \in \gz'$ and $c \ne 0$ such that $J_{X_1}J_{X_2}V = J_{X_3}V + c J_{Z}V$, and so $J_{X_1}J_ZV \in \gz''$, for an open subset of vectors $X_1 \in \gz'$. It follows that the $8$-dimensional space $\gz''$ by itself satisfies the condition~\eqref{eq:weakJ2} which then leads to a contradiction by replacing $\gz'$ by $\gz''$ in the argument in the above paragraph.

We now consider several cases. For dimensions of irreducible Clifford modules we refer the reader to \cite[Table~2]{ABS}.

If $d=1$ we are in case~\eqref{it:11}.

If $d=2$ and $X_1, X_2$ are orthonormal vectors in $\gz'$, then $\gz''=\Span(X_1, X_2,Z)$, where $J_{X_1}J_{X_2}V = J_ZV$ and $\dim \gv'=4$. If $d=3$ and $J_{X_1}J_{X_2}V = J_{X_3}V$ for an orthonormal basis $X_1, X_2, X_3$ for $\gz'$, then $\gz''=\gz'$. The module $\gv'$ is a $\mathrm{Cl}(\gz'')$-module and it cannot be the sum of two non-isomorphic submodules, as the $J^2$-property is satisfied. Hence $\dim \gv'=4$ and we are in case~\eqref{it:233}.

If $d=3$ and $J_{X_1}J_{X_2}V = \pm J_{X_3}V$ for no orthonormal basis $X_1, X_2, X_3$ for $\gz'$, denote $Z_i, \, i=1,2,3$, the unit vectors in $\gz$ defined by $J_{X_i}J_{X_j}V = J_{Z_k}V$, where $(i,j,k)$ is a cyclic permutation of $(1,2,3)$. Then we have $Z_i \perp X_j$ for $i \ne j$, and $J_{Z_i}J_{Z_j}V=J_{Z_k}V$, where $(i,j,k)$ is a cyclic permutation of $(1,2,3)$; in particular, $Z_1, Z_2$ and $Z_3$ are orthonormal. We also have $\<Z_i,X_i\>=-\<J_{X_1}J_{X_2}J_{X_3}V,V\>$, for all $i=1,2,3$. It follows that $\gz''=\gz' \oplus \Span(Z_1, Z_2, Z_3)$, so that $\dim \gz'' = 6$ and then $\gv'$ must be an irreducible $\mathrm{Cl}(\gz'')$-module and $\dim \gv' = 8$, as in case~\eqref{it:36}.

Suppose $4 \le d \le 7$. We can always find an orthonormal basis $X_i, \, i=1, \dots, d$, for $\gz'$ such that $J_{X_1}J_{X_2}V \ne \pm J_{X_3}V$ and then construct the vectors $Z_1, Z_2, Z_3 \in \gz''$ from $X_1, X_2, X_3$ as above. Then $\gz''$ contain the six-dimensional space $\gz_6 = \gz' \oplus \Span(Z_1, Z_2, Z_3)$. Now if $X_4 \not \in \gz_6$, then $\dim \gz'' \ge 7$. If $X_4 \in \gz_6$, then we can rotate the triple $X_1, X_2, X_3$ to get $X_4 \in \Span (X_1, Z_1)$, and then for the vector $Z \in \gz''$ defined by $J_{X_1}J_{X_4}V = J_ZV$ we obtain $J_ZV \in \Span(J_{X_1}J_{X_2}J_{X_3}V, V)$. It follows that $J_ZV \perp J_{\gz_6}V$, hence $Z \perp \gz_6$, and so, again, $\dim \gz'' \ge 7$. To see that $\dim \gz'' = 7$ we first consider the cases $d=4,5,6$. Then $\gv'$ is an irreducible $\mathrm{Cl}(\gz')$-module, and so $\dim \gv' = 8$. But as $\gv'$ is also a $\mathrm{Cl}(\gz'')$-module by \eqref{it:gv'mod} we find that $\dim \gz''=7$ which gives case~\eqref{it:477}. The only remaining case to consider is $d=7$. If $\dim \gz'' = 7$, then the $J^2$-property is satisfied, and so $\gv'$ is an irreducible $\mathrm{Cl}(\gz'')$-module and $\dim \gv'=8$, as required. If $\dim \gz'' \ge 8$, then $\dim \gz'' \ge 9$ as we showed above, and so $\dim \gv' \ge 32$, as $\gv'$ is a $\mathrm{Cl}(\gz'')$-module by \eqref{it:gv'mod}. But on the other hand, the maximal possible dimension of $\gv'$ viewed as a $\mathrm{Cl}(\gz')$-module is $16$ and it is attained when $\gv'$ is the sum of two non-isomorphic irreducible $8$-dimensional $\mathrm{Cl}(\gz')$-modules. This contradiction completes the proof of the lemma.
\end{proof}
}

We now use the condition that the subspace $L = \br(V+sA) \oplus \{sX+J_XV \, | \, X \in \gz'\}$ is a maximal $(-1)$-subspace. From Lemma~\ref{l:weakJ2} we see that in the cases $d=2,4,5,6$ we can simply replace $\gz'$ by $\gz''$ in the definition of $L$ to obtain a bigger $(-1)$-subspace. In the other cases, $L$ \emph{can} be maximal, for example, if $\gz=\gz''$. Note however that if $\gz$ is large, it may happen that $L$ lies in a bigger $(-1)$-subspace (which is still of the form given above, with $\gz', \gz''$ and $\gv'$ as in Lemma~\ref{l:weakJ2}) -- for example, if $d=1$ and $\dim \gz \ge 3$, or if $d=3$ and $\dim \gz'' = 6$ (as in case~\eqref{it:36}) and $\dim \gz \ge 7$.

To summarise, any maximal $(-1)$-subspace has the form $L = \br(V+sA) \oplus \{sX+J_XV \, | \, X \in \gz'\}$ and one of the following may occur.

\begin{itemize}
  \item
  Either $\dim \gz' \in \{1,3,7\}$ and $\gz'$ satisfies the $J^2$-property on $\gv'=\br V \oplus J_{\gz'}V$ (so that for any orthogonal $X_1, X_2 \in \gz'$ there exists $X_3 \in \gz'$ such that $J_{X_1}J_{X_2}V=J_{X_3}V$; it is easy to see that we can replace $V$ by any vector from $\gv'$).

  In these cases, $L$ is a subspace of a well-positioned subalgebra $\gs'=\ga \oplus \gv' \oplus \gz' \subset \gs$ tangent to a homogeneous totally geodesic rank-one submanifold $S' \subset S$ which is isometric to $\mathbb{C}H^2, \mathbb{H}H^2$ or $\mathbb{O}H^2$ respectively. As $L$ is $\tR$-invariant with respect to $S$, it is also $\tR$-invariant with respect to $S'$. But as $S'$ a symmetric space, any $\tR$-invariant subspace of $\gs'$ is tangent to a totally geodesic submanifold $M$ (of $S'$, and hence of $S$). As $L$ is a $(-1)$-subspace, $M$ must be of constant curvature $-1$.

  \item
  Or  $\dim \gz' =3, \, \dim \gz''=6$, and for any orthogonal $X_1, X_2 \in \gz'$ there exists $X_3 \in \gz''$ such that $J_{X_1}J_{X_2}V=J_{X_3}V$. Then $L$ is a subspace of a well-positioned subalgebra $\gs'=\ga \oplus \gv' \oplus \gz'' \subset \gs$, where $\gv'$ is the irreducible $8$-dimensional $\mathrm{Cl}(\gz'')$-module. The algebra $\gs'$ is of dimension $15$ and is tangent to a homogeneous totally geodesic non-symmetric submanifold $S' \subset S$. Then $L$ is indeed the tangent space to a totally geodesic submanifold $M \subset S' \subset S$ of curvature $-1$, as explained in Example~\ref{ex:except}. \qedhere
\end{itemize}
\end{proof}

\section{Einstein hypersurfaces in $\mathbb{O}P^2$ or $\mathbb{O}H^2$}
\label{s:EinC}


\begin{proof}[Proof of Theorem~\ref{t:cayley}]
Let $M=M^{15}$ be a connected Einstein hypersurface in $\tM$, where $\tM$ is one of the spaces $\mathbb{O}P^2$ or $\mathbb{O}H^2$. We normalise the metric on $\tM$ in such a way that the maximal absolute value of the curvature is $1$ and denote $\ve=\pm1$ the sign of the curvature of $\tM$.

Let $x \in M$ and let $\xi$ be a unit normal vector to $M$ at $x$. Denote $X_i, \, i=1, \dots, 15$, an orthonormal basis of principal vectors at $x$, with $\la_i$ the corresponding principal curvatures and denote $H = \sum_i \la_i$ the mean curvature.

By Gauss equations, $\tR(X_i,X_k,X_k,X_j) = R(X_i,X_k,X_k,X_j) + (\la_k^2 \delta_{ik}\delta_{jk}-\la_i \la_k\delta_{ij})$. Summing up by $k$ we obtain
  \begin{equation}\label{eq:Gauss}
    \tR(X_i,\xi,\xi,X_j)=(-\la_i^2 + H \la_i + C)\delta_{ij},
  \end{equation}
where $C$ is the difference of the Einstein constants of $\tM$ and of $M$.

It follows from~\eqref{eq:Gauss} that $M$ is a curvature-adapted hypersurface. Recall that a submanifold is called \emph{curvature-adapted} if for every normal, its tangent space is invariant under the corresponding normal Jacobi operator, and the restriction of the latter to the tangent space commutes with the shape operator relative to that normal. Such submanifolds are extensively studied in the literature; we refer the reader to \cite{Ber,Mur} and the references therein. In particular, in \cite{Mur}, the author introduced several classes of curvature-adapted hypersurfaces in $\bO P^2$ and in $\bO H^2$ (and conjectured that there are no others) and proved that if such hypersurface in $\bO P^2$ is complete and has constant principal curvatures, then it is a principal orbit of a cohomogeneity one action.

In our case, the restriction of $\tR_\xi$ to $T_xM$ has two eigenvalues, $\ve$ and $\frac{\ve}{4}$, of multiplicities $7$ and $8$ respectively. We denote $L_\ve$ and $L_{\frac{\ve}{4}}$ the corresponding eigenspaces. Then up to relabelling, \eqref{eq:Gauss} gives
  \begin{equation}\label{eq:ca}
    -\la_i^2 + H \la_i + C = \ve, \quad \text{for } 1 \le i \le 7; \qquad -\la_i^2 + H \la_i + C = \tfrac{\ve}{4}, \quad \text{for } 8 \le i \le 15.
  \end{equation}

  \begin{lemma} \label{l:finite}
    Equations~\eqref{eq:ca} have a finite number of solutions $(\la_1, \dots, \la_{15}) \in \br^{15}$.
  \end{lemma}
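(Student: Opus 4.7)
The plan is to exploit the fact that the equations in~\eqref{eq:ca} force each principal curvature $\la_i$ to take only one of two possible values within each of the two blocks, and then, for each of the finitely many combinatorial ``types,'' to reduce the remaining constraints to a polynomial equation in a single variable.

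First I would subtract two equations of~\eqref{eq:ca} within the first block: for $1 \le i, j \le 7$ this gives $(\la_i - \la_j)(\la_i + \la_j - H) = 0$, so the values $\la_1, \dots, \la_7$ are at most two in number and, if two, sum to $H$. Denote them $a$ (with multiplicity $p$) and $H - a$ (with multiplicity $7 - p$), where $p \in \{0, 1, \dots, 7\}$. The same argument applied to the second block produces values $c, H - c$ with multiplicities $q, 8 - q$, $q \in \{0, 1, \dots, 8\}$. Substituting into $H = \sum_i \la_i$ then yields the linear relation
\[
(p + q - 14)\, H \;=\; (2p - 7)\, a + (2q - 8)\, c. \qquad (\star)
\]
A crucial observation is that $2p - 7$ is always odd, hence never zero.

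For each fixed type $(p, q)$, the triple $(a, c, H)$ is constrained by $(\star)$ together with the two quadratic relations $a(H - a) = \ve - C$ and $c(H - c) = \tfrac{\ve}{4} - C$ extracted from~\eqref{eq:ca}. In the generic case $p + q \ne 14$, $(\star)$ writes $H$ as a linear function of $a$ and $c$; substituting into the two quadratics gives two polynomial equations in $(a, c)$, and because $2p - 7 \ne 0$ one of these is linear in $a$ with leading coefficient a nonzero multiple of $c$. Solving for $a$ in terms of $c$ and substituting into the other produces a single polynomial equation in $c$ of degree at most four, with finitely many real roots. The subcase $c = 0$ is handled separately: then $\tfrac{\ve}{4} - C = 0$, and the remaining quadratic determines $a^2$, yielding again finitely many solutions.

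The remaining case $p + q = 14$, which forces $(p, q) \in \{(6, 8), (7, 7)\}$, is treated analogously: $(\star)$ now gives $a$ as a scalar multiple of $c$, while subtracting the two quadratic relations yields $(a - c)(H - a - c) = \tfrac{3\ve}{4}$. Since $a - c$ cannot vanish (else $\ve = 0$), $H$ is determined as a rational function of $c$, and substituting into one of the quadratics leaves a polynomial equation in $c$ with finitely many roots. Combining over the finitely many pairs $(p, q)$ and, for each, the finitely many assignments of the $7 + 8$ indices to the two allowed values, the lemma follows. The main bookkeeping hurdle I foresee lies in the degenerate subcases (such as $c = 0$, or the vanishing of $2q - 8$ when $q = 4$, or $q - p - 7 = 0$ when $(p, q) \in \{(0, 7), (1, 8)\}$); in each, a short additional argument is required to verify that the system still has no positive-dimensional family of solutions, but in every instance the extra algebraic constraint suffices to pin down either $c^2$ or $a^2$.
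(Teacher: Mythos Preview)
Your approach is correct and follows essentially the same strategy as the paper's: observe that within each block the $\la_i$ take at most two values, enumerate the finitely many combinatorial types, and for each type reduce to a polynomial constraint whose nontriviality ultimately rests on $\ve\ne 0$. The paper's only difference is that it parametrizes directly by $H$ (writing $\la_i=\tfrac12 H\pm\tfrac12\sqrt{H^2+4C-4\ve}$ and $\la_i=\tfrac12 H\pm\tfrac12\sqrt{H^2+4C-\ve}$), so that $H=\sum_i\la_i$ becomes a single equation in $H$ which, after clearing the two radicals, is a biquadratic; verifying this biquadratic is not an identity is a two-line coefficient comparison and bypasses the degenerate subcases you anticipate.
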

  \begin{proof}
    For $i=1, \dots, 7$, let $\la_i=\frac12 H + \frac12 \sqrt{H^2+(4C-4\ve)}$ for $q_1$ values of $i$, and $\la_i=\frac12 H - \frac12 \sqrt{H^2+(4C-4\ve)}$ for $q_2$ values of $i$, where $q_1, q_2 \ge 0$ and $q_1+q_2=7$. Similarly, for $i=8, \dots, 15$, let $\la_i=\frac12 H + \frac12 \sqrt{H^2+(4C-\ve)}$ for $q_3$ values of $i$, and $\la_i=\frac12 H - \frac12 \sqrt{H^2+(4C-\ve)}$ for $q_4$ values of $i$, where $q_3, q_4 \ge 0$ and $q_3+q_4=8$.

    It suffices to show that $H$ can take only finite number of values. From $H=\sum_{i=1}^{15} \la_i$ we get $13 H = (q_2-q_1)\sqrt{H^2+(4C-4\ve)} + (q_4-q_3)\sqrt{H^2+(4C-\ve)}$. Clearing the radicals we obtain the following biquadratic equation for $H$:
    $\big((169-(q_2-q_1)^2-(q_4-q_3)^2)H^2-((q_2-q_1)^2(4C-4\ve)+(q_4-q_3)^2(4C-\ve))\big)^2 = 4(q_2-q_1)^2(q_4-q_3)^2(H^2+(4C-4\ve))(H^2+(4C-\ve))$. If it is satisfied for infinitely many values $H \in \br$, it must be satisfied identically. If $(q_2-q_1)(q_4-q_3) \ne 0$, then the right-hand side must be a square of a polynomial in $H^2$ which is only possible when $4C-4\ve=4C-\ve$; this is a contradiction, as $\ve = \pm1$. If $(q_2-q_1)(q_4-q_3) = 0$, then $q_4=q_3=4$ (as $q_2+q_1=7$) and we obtain $(169-(q_2-q_1)^2)H^2-(q_2-q_1)^2(4C-4\ve) = 0$, and so $q_2-q_1 = \pm 13$, which is again a contradiction.
  \end{proof}

From Lemma~\ref{l:finite} it follows that the principal curvatures of $M$ are constant and have constant multiplicities. We now extend $\xi$ to a smooth unit normal vector field on a neighbourhood $U \subset M$ of $x$ and $X_i, \, i=1, \dots, 15$, to a smooth local orthonormal frame of principal vector fields on $U$, with $\la_i$ the corresponding (constant) principal curvatures. Codazzi equations give
  \begin{equation}\label{eq:codazziO}
    \tR(X_k,X_i,X_j,\xi)  = (\la_i-\la_j) \<\nabla_k X_i, X_j\> - (\la_k-\la_j) \<\nabla_i X_k, X_j\>,
  \end{equation}
where we abbreviate $\nabla_{X_i}$ to $\nabla_i$.

We now differentiate equations~\eqref{eq:Gauss}. Using the fact that $\tM$ is locally symmetric and that $\la_i$ are constant, we get from \eqref{eq:Gauss} and \eqref{eq:codazziO}:
  \begin{equation}\label{eq:difGauss}
    (\la_i-\la_j) (\la_i+\la_j-2\la_k -H) \<\nabla_k X_i, X_j\> + \la_k (\la_j-\la_k) \<\nabla_i X_j, X_k\> + \la_k (\la_k-\la_i) \<\nabla_j X_k, X_i\> = 0,
  \end{equation}
for $i,j,k = 1, \dots, 15$.

Label the principal curvatures as in \eqref{eq:ca}. Then for $i=1, \dots, 7$, the principal curvatures $\la_i$ can take no more than two values, $\alpha_1$ and $\alpha_2$, with the corresponding multiplicities $p_1$ and $p_2$; we have $p_1+p_2 = 7$ and we label them in such a way that $p_1 \ge  p_2 \ge 0$. Similarly, for $8 \le i \le 15, \; \la_i$ can take no more than two values, $\alpha_3$ and $\alpha_4$, with multiplicities $p_3$ and $p_4$ respectively, such that $p_3+p_4 = 8$; label them in such a way that $p_3 \ge  p_4 \ge 0$. Note that $\alpha_s, \; s=1,2,3,4$, are pairwise distinct; denote $E_s$ the corresponding eigenspaces (eigendistributions). We have $L_\ve=E_1 \oplus E_2$ and $L_{\frac{\ve}{4}}=E_3 \oplus E_4$ (note that $E_2$ and $E_4$ can be trivial). 

Take $X_k =X_j \in E_s$ and $\la_i \ne \la_k = \la_j$ in~\eqref{eq:codazziO}. We obtain $\tR(X_i,X_k,X_k,\xi)  = (\la_i-\la_k)$ $\<\nabla_k X_k, X_i\>$. As $\tM$ is rank-one symmetric, its curvature tensor has the ``duality property": a vector $Y$ is an eigenvector of $\tR_X$ if and only if $X$ is an eigenvector of $\tR_Y$. It follows that $\tR(X_i,X_k,X_k,\xi) = 0$ and so $\<\nabla_k X_k, X_i\>=0$. But an orthonormal basis of principal vectors lying in $E_s$ can be chosen arbitrarily, so for any $X \in E_s$, we have $\<\nabla_X X, X_i\>=0$, and hence $\<\nabla_k X_j, X_i\> + \<\nabla_j X_k, X_i\> = 0$, for all $X_i, X_j, X_k$ with $\la_i \ne \la_k = \la_j$. But then from \eqref{eq:difGauss} with $\la_i \ne \la_k = \la_j$ we get $(\la_i-\la_j-H) \<\nabla_k X_j, X_i\> + \la_j \<\nabla_j X_k, X_i\> = 0$ which gives
  \begin{equation}\label{eq:alphalpha}
    (\la_i-2\la_j-H) \<\nabla_k X_j, X_i\> = 0, \quad \text{for } \la_i \ne \la_k = \la_j.
  \end{equation}

\smallskip

Recall that the algebra of octonions $\bO$ is an $8$-dimensional division algebra over $\br$; it is non-associative and non-commutative. For an octonion $a$, its conjugate is given by $a^*=2\<a,1\>1-a$. Note that $aa^*=a^*a=\|a\|^21$. An octonion $a$ is called imaginary, if $a \perp 1$, and unit, if $\|a\|=1$. Denote $\mathcal{L}_a$ (respectively $\mathcal{R}_a$) the operator of left (respectively right) multiplication by $a \in \bO$. Then $\mathcal{L}_a^*=\mathcal{L}_{a^*}, \; \mathcal{R}_a^*=\mathcal{R}_{a^*}$, and the operators $\mathcal{L}_a, \mathcal{R}_a$ are skew-symmetric for imaginary $a$ and orthogonal for unit $a$. The maps $a \to \mathcal{L}_a$ and $a \to \mathcal{R}_a$ define two non-isomorphic representations of $\mathrm{Cl}(7)$ on $\br^8$.

The tangent space $T_x\tM$ can be identified with $\bO \oplus \bO$, so that its elements are the pairs $(a,b)$ of octonions, with the inner product $\<(a,b),(c,d)\>= \<a,c\>+\<b,d\>$. From \cite{BG,EPS}, the curvature tensor is given by
  \begin{multline}\label{eq:curvO}
  \tR ((a,b), (c,d))(e,f) = \tfrac{\ve}{4} (4\<c,e\>a-4\<a,e\>c + (ed)b^* - (eb)d^* + (ad - cb)f^*, \\
  4\<d,f\>b - 4\<b,f\>d + a^*(cf) - c^*(af) - e^*(ad - cb)).
  \end{multline}

As the isotropy subgroup acts transitively on $T_x\tM$ we can take $\xi=(1,0)$. Then from~\eqref{eq:curvO} we obtain $\tR_\xi(a,b)=\tfrac{\ve}{4} (4a-4\<a,1\>1 , b)$, and so the eigenspaces of $\tR_\xi$ are
  \begin{equation*}
    L_\ve = \{(a,0) \, | \, a \in \bO, \, a \perp 1\}, \qquad L_{\frac{\ve}{4}}= \{(0,b) \, | \, b \in \bO\}.
  \end{equation*}
Then \eqref{eq:curvO} gives
  \begin{equation}\label{eq:RLLL}
    \tR(L_\ve, L_\ve) L_\ve, \; \tR(L_{\frac{\ve}{4}}, L_\ve) L_\ve, \; \tR(L_{\frac{\ve}{4}}, L_{\frac{\ve}{4}}) L_{\frac{\ve}{4}} \perp \xi,
  \end{equation}
and for $X=(a,0) \in L_{\ve}$ and $Y=(0,d), Z=(0,f) \in L_{\frac{\ve}{4}}$,
  \begin{equation}\label{eq:adf}
    \tR(X,Y,Z,\xi) = \tfrac{\ve}{4} \<(ad)f^*,1\> = \tfrac{\ve}{4} \<ad,f\> = \tfrac{\ve}{4} \<\mathcal{L}_ad,f\>.
  \end{equation}
It now follows from \eqref{eq:RLLL} and \eqref{eq:codazziO} that
  \begin{equation*}
  \nabla_{E_1}E_1 \subset E_1, \quad \nabla_{E_2}E_2 \subset E_2, \quad \nabla_{E_3}E_3 \perp E_4, \quad \nabla_{E_4}E_4 \perp E_3.
  \end{equation*}

  \begin{lemma}\label{l:78717}
    We have $\alpha_1 = 2 \alpha_3 + H$. Moreover, $E_1=L_\ve, \; E_2=0$ \emph{(}so that $p_1=7, \, p_2 = 0$\emph{)} and one of two cases may occur:
    \begin{enumerate}[label=\emph{(\arabic*)},ref=\arabic*]
      \item \label{it:78}
      $E_3 = L_{\frac{\ve}{4}}, \; E_4=0$ \emph{(}so that $p_3=8, \, p_4 = 0$\emph{)}.

      \item \label{it:717}
      $L_{\frac{\ve}{4}}=E_3 \oplus E_4$, with $p_3=7$ and $p_4 = 1$.
    \end{enumerate}
  \end{lemma}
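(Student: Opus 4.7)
The plan is to combine three inputs: the algebraic relation \eqref{eq:alphalpha} coming from the differentiated Gauss equation, the Codazzi equation \eqref{eq:codazziO}, and the explicit mixed-curvature formula \eqref{eq:adf}. The first step is to specialize \eqref{eq:codazziO} to a triple $X=(a,0)\in E_s\subset L_\ve$ and $Y=(0,d),Z=(0,f)\in E_t\subset L_{\frac{\ve}{4}}$ with $\la_Y=\la_Z=\alpha_t$: the second summand drops because $\la_Y=\la_Z$, and \eqref{eq:adf} converts the left-hand side, giving the compact identity
\begin{equation*}
\<\nabla_Y X, Z\>\;=\;\frac{\ve/4}{\alpha_t-\alpha_s}\,\<\mathcal{L}_a d, f\>,\qquad a\in E_s,\ d,f\in E_t.
\end{equation*}
Pairing this with \eqref{eq:alphalpha} applied at $X_i=X,\,X_j=Z,\,X_k=Y$, and using $\<\nabla_Y Z, X\>=-\<\nabla_Y X, Z\>$, yields the \emph{dichotomy}: for every $(s,t)\in\{1,2\}\times\{3,4\}$, either $\alpha_s=2\alpha_t+H$, or $\<\mathcal{L}_a d, f\>=0$ for all $a\in E_s$ and $d,f\in E_t$. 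In the second alternative, invertibility of $\mathcal{L}_a$ for $a\ne 0$ gives $\mathcal{L}_{E_s}(E_t)\subset E_{t'}$ (where $\{t,t'\}=\{3,4\}$); equivalently, by the octonion identity $\<\mathcal{L}_a d, f\>=\<a, fd^*\>$, this says $fd^*\perp E_s$ inside $\bO'$.

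The heart of the argument is then a combinatorial chase. Distinctness of the $\alpha_s$ forbids that any two of the four potential relations $\alpha_s=2\alpha_t+H$ share a common index. Beginning with $(s,t)=(1,3)$, the perpendicularity alternative requires $\mathcal{L}_{E_1}(E_3)\subset E_4$, so $p_3\le p_4$; combined with $p_3\ge p_4$, this leaves only the degenerate case $p_3=p_4=4$. Outside that case, $\alpha_1=2\alpha_3+H$ follows. Then $\alpha_1=2\alpha_4+H$ is forbidden (lest $\alpha_3=\alpha_4$), so the $(1,4)$ dichotomy delivers $fd^*\perp E_1$ for all $d,f\in E_4$. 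Once we know $E_1=\bO'$, this forces $fd^*\in\br\cdot 1$, whence $f\in\br d$ and therefore $p_4\le 1$. Finally, $p_4\le 1$ combined with the $(2,3)$ dichotomy forces $\mathcal{L}_{E_2}(E_3)\subset E_4$, which is impossible on dimensional grounds (since $\dim E_3\ge 7>1\ge\dim E_4$) unless $E_2=0$, giving $p_1=7$ and closing the bootstrap.

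I expect two technical obstacles. The first is the bootstrap itself, since the derivations of $p_1=7$ and $p_4\le 1$ each leaned on the other. I would break it by establishing $p_2=0$ independently, using the internal Codazzi equation with $X_k,X_i\in L_\ve$: a direct computation from \eqref{eq:curvO} shows $\tR(L_\ve,L_\ve)L_{\frac{\ve}{4}}\subset L_{\frac{\ve}{4}}$, so that curvature side vanishes against $\xi$, reducing Codazzi to a purely algebraic constraint on the restriction of the shape operator to $L_\ve$ which, combined with the dichotomy, forces $p_2=0$. The second and harder obstacle is the degenerate subcase $p_3=p_4=4$, in which $\mathcal{L}_a$ swaps $E_3\leftrightarrow E_4$ for every $a\in E_1$; I would rule it out by running the parallel dichotomies at $(s,t)=(2,3)$ and $(2,4)$, exploiting the Clifford-type relations among the $\mathcal{L}_a$ (from $\mathcal{L}_a^2=-\|a\|^2\mathrm{id}$ together with the alternative-algebra identities of $\bO$), and cross-checking with the integer constraint on $H$ that appeared in the proof of Lemma~\ref{l:finite}; this is where the bulk of the technical work will lie.
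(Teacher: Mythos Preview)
Your proposal uses exactly the paper's ingredients --- the dichotomy extracted from \eqref{eq:alphalpha}, \eqref{eq:codazziO} and \eqref{eq:adf} --- and in the case $p_3>4$ your chain is essentially the paper's argument, only assembled in an order that makes it look circular. Your ``first obstacle'' is self-inflicted. You do not need $p_4\le 1$ before running the $(2,3)$ step: outside the degenerate case you already have $p_3>4>p_4$, and once $\alpha_1=2\alpha_3+H$ is established the $(2,3)$ dichotomy (with $\alpha_2=2\alpha_3+H$ excluded by $\alpha_1\ne\alpha_2$) forces $\mathcal{L}_{E_2}E_3\subset E_4$, hence $p_2=0$ from $p_3>p_4$ alone. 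With $E_1=1^\perp$ in hand, the $(1,4)$ step then yields $p_4\le 1$ with no circularity. This is exactly the paper's order: it applies the isotropic-subspace bound to the $(s,3)$ dichotomy for \emph{both} $s=1$ and $s=2$ at once and obtains $p_2=0$ before ever looking at $E_4$. Your proposed ``internal Codazzi'' fix is unnecessary.

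For the degenerate case $p_3=p_4=4$, your plan is too vague and reaches for the wrong tools; neither Clifford relations among several $\mathcal{L}_a$ nor the arithmetic from Lemma~\ref{l:finite} are needed. The paper's resolution is a single observation you are missing: for a nonzero imaginary $a$, the operator $\mathcal{L}_a$ on $\bO$ is orthogonal and skew-symmetric, so a $4$-dimensional isotropic subspace is \emph{maximal}, and its orthogonal complement is then automatically isotropic as well (indeed $\mathcal{L}_a V_3=V_4$). Hence once either of $V_3,V_4$ is isotropic for all $a\in V_1$, so is the other. Repeating for $V_2$ when $p_2\ne 0$ (again at least one of $\alpha_2=2\alpha_3+H$, $\alpha_2=2\alpha_4+H$ must fail), one obtains that $V_3$ is isotropic for every imaginary $a$, whence $fd^*\in\br$ for all $d,f\in V_3$, contradicting $\dim V_3=4$.
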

  \begin{proof}
    Note that $p_3 \ge p_4$ and $p_3+p_4=8$, so $p_3 \ge 4$.

    First suppose that $p_3 > 4$. If there exists $s =1, 2$ such that $\alpha_s \ne 2 \alpha_3 + H$, then taking $X_i \in E_s$ and $X_j, X_k \in E_3$ in~\eqref{eq:alphalpha} we get $\<\nabla_k X_j, X_i\> = 0$, and so $\tR(X_i,X_k,X_j,\xi) = 0$ by~\eqref{eq:codazziO}. But then $X_i \in L_\ve, X_j, X_k \in L_{\frac{\ve}{4}}$ and so by~\eqref{eq:adf} with $X=X_i=(a,0), \, a \perp 1$, and $Y=X_k=(0,d), \, Z=X_j=(0,f)$, we get $\<\mathcal{L}_ad,f\>=0$. But as the octonion $a$ is unit and imaginary, the operator $\mathcal{L}_a$ on $\bO$ is both orthogonal and skew-symmetric. Then its maximal isotropic subspace has dimension $4$ which contradicts the fact that $p_3 > 4$. From this contradiction we obtain $p_2=0, \, p_1=7$ and $\alpha_1 = 2 \alpha_3 + H$. Furthermore, if $p_4 \ne 0$, then $\alpha_1 \ne 2 \alpha_4 + H$ and a similar argument shows that $\<ad,f\>=0$, for all $a \perp 1$ and for all $d,f$ in a subspace of $L_{\frac{\ve}{4}}$ of dimension $p_4$. But then we have $\<a,fd^*\>=0$, and so $fd^*$ is real which is only possible when $d$ and $f$ are (real) proportional. It follows that $p_4=1$.

    Now consider the case $p_3=4$. Then $p_4=4$ and we have $\alpha_1 \ne 2 \alpha_s + H$ for at least one of $s=3,4$, say for $s=3$. Denote $V_3$ and $V_4$ the projections of $E_3$ and $E_4$ to the second copy of $\bO$ in the decomposition $T_x\tM=\bO \oplus \bO$. Then $V_3$ and $V_4$ are orthogonal subspaces of $\bO$ of dimension $4$. Similarly, denote $V_1$ and $V_2$ the projections of $E_1$ and $E_2$ to the first copy of $\bO$ in $T_x\tM=\bO \oplus \bO$. The subspaces $V_3$ and $V_4$ are orthogonal, of dimensions $p_3$ and $p_4$ respectively, with $V_1 \oplus V_2 = \bO \cap 1^\perp$. Repeating the arguments from the previous paragraph we find that $\<\mathcal{L}_ad,f\>=0$, for all $d, f \in V_3$ and all $a \in V_1$. For a unit octonion $a \in V_1$, the operator $\mathcal{L}_a$ on $\bO$ is both orthogonal and skew-symmetric. It has an isotropic subspace $V_3$ of dimension $4$, and hence the complementary subspace $V_4$ must also be isotropic, so $\<\mathcal{L}_ad,f\>=0$, for all $d, f \in V_4$ and all $a \in V_1$. But if $p_2 \ne 0$, then $\alpha_2 \ne 2 \alpha_s + H$ for at least one of $s=3,4$. Repeating the argument above for $\alpha_2$ we obtain $\<\mathcal{L}_ad,f\>=0$, for all $a \in V_2$ and all $d, f \in V_s$, where $s=3,4$. It follows that for all $d, f \in V_3$ and for all $a \perp 1$ we have $0=\<\mathcal{L}_ad,f\>=\<ad,f\>= \<a,fd^*\>$. But then $fd^*$ must always be real which contradicts the fact that $p_3 = 4$.
  \end{proof}

In case~\eqref{it:78} of Lemma~\ref{l:78717} we have $\alpha_1 = 2 \alpha_3 + H$ and $H = 7\alpha_1 + 8\alpha_3$. Eliminating $C$ from~\eqref{eq:ca} we obtain $(\alpha_3-\alpha_1)(\alpha_3+\alpha_1 - H) = \tfrac{3}{4}\ve$. Solving these equations we find
  \begin{equation}\label{eq:alphas78}
    \ve=1, \quad \alpha_1 = -\ve' \frac{5\sqrt{6}}{24}, \quad \alpha_3 = \ve' \frac{\sqrt{6}}{8}, \quad \text{where } \ve'=\pm1
  \end{equation}
(the sign of $\ve'$ depends on the direction of $\xi$). In case~\eqref{it:717} we have $\alpha_1 = 2 \alpha_3 + H$ and $H = 7\alpha_1 + 7\alpha_3+\alpha_4$. Combining with~\eqref{eq:ca} and solving the resulting system of equations we obtain
  \begin{equation}\label{eq:alphas771}
    \ve=1, \quad \alpha_1 = -\ve' \frac{6}{2\sqrt{91}}, \quad \alpha_3 = \ve' \frac{7}{2\sqrt{91}}, \quad \alpha_4 = -\ve' \frac{27}{2\sqrt{91}}, \quad \text{where } \ve'=\pm1.
  \end{equation}

Note that in both cases, $\ve=1$. This proves that there are no Einstein hypersurfaces in $\bO H^2$.

Next we show that case~\eqref{it:717} of Lemma~\ref{l:78717} is not possible. Choose arbitrary $X \in E_1, \, Y \in E_3$ and $Z \in E_4$, and substitute in~\eqref{eq:difGauss} first $(\la_i, \la_j, \la_k)=(\alpha_1, \alpha_3, \alpha_4)$ and $(X_i,X_j,X_k)=(X,Y,Z)$, then $(\la_i, \la_j, \la_k) = (\alpha_4, \alpha_1, \alpha_3)$ and $(X_i,X_j,X_k)=(Z,X,Y)$, and then $(\la_i, \la_j, \la_k)=(\alpha_3, \alpha_4, \alpha_1)$ and $(X_i,X_j,X_k)= (Y,Z,X)$. We get a system of linear equations for $\<\nabla_ZX,Y\>, \<\nabla_XY,Z\>$ and $\<\nabla_YZ,X\>$ whose matrix, after substituting the values of $\alpha_s$ from \eqref{eq:alphas771}, is given by
  \begin{equation*}
    Q= \frac{1}{(4\cdot 91)^3}
    \left(
        \begin{array}{ccc}
          -75 \times 13 & -27 \times 34 & 27 \times 21 \\
          6 \times 13 & 12 \times 34 & 6 \times 21 \\
          -7 \times 13 & 7 \times 34 & 27 \times 21 \\
        \end{array}
      \right).
  \end{equation*}
We have $\det Q = -\frac{39051}{16562} \ne 0$, and so $\<\nabla_ZX,Y\>=\<\nabla_XY,Z\>=\<\nabla_YZ,X\>=0$. Then by~\eqref{eq:codazziO} we get $\tR(X,Y,Z,\xi) = 0$. Then for $X=(a,0), \, Y=(0,d), \, Z=(0,f)$, equation~\eqref{eq:adf} gives $\<ad,f\> = 0$, for all imaginary $a$ and all $d \perp f$ (note that as $p_4=1$, the octonion $f$ is fixed up to a real multiple). But as $a$ is imaginary, for $d=af$ we have $d \perp f$ and $ad=a(af)=a^2 f = - \|a\|^2 f$, which is a contradiction with $\<ad,f\> = 0$.

To complete the proof we show that an Einstein hypersurface in $\bO P^2$ whose principal curvatures and principal distributions are as those given in case~\eqref{it:78} of Lemma~\ref{l:78717} is a domain of the geodesic sphere of radius $r_0$ (see Example~\ref{ex:sphereO}). Consider the normal exponential map $\Phi_r: M \to \bO P^2$ defined by $\Phi_r(x)= \exp_x(r\xi)$, where $\xi$ is chosen in such a way that $\ve'= 1$ in the equations~\eqref{eq:alphas78}. The rank of $(\Phi_r)_*$ is the dimension of the span of the values at $r$ of the Jacobi vector fields $F_i$ along the geodesic $r \to \exp_x(r\xi)$ which are defined by the initial conditions $F_i(0)=X_i, \; \on_\xi F_i(0) = - \la_i X_i$ (cf. \cite[5.2]{Ber}). Solving the Jacobi equations we find $F_i(r)=(\cos r + \frac{5\sqrt{6}}{24} \sin r) \overline{X}_i$, for $i=1, \dots, 7$, and $F_i(r)=(\cos \frac12r - \frac{\sqrt{6}}{4} \sin \frac12 r) \overline{X}_i$, for $i=8, \dots, 15$, where $\overline{X}_i$ is the parallel translation of $X_i$ along the geodesic $r \to \exp_x(r\xi)$. It follows that at $r=r_0$, that is, when $\cot r = -\frac{5\sqrt{6}}{24}$, all the Jacobi fields vanish, so that the rank of $(\Phi_{r_0})_*$ is zero. As $M$ is connected, the image of the map $\Phi_{r_0}$ is a single point, and so $M$ lies on the geodesic sphere of radius $r_0$ centred at that point.
\end{proof}

\section{$2$-stein hypersurfaces of a $2$-stein space}
\label{s:2stein}

Suppose $M$ is a $2$-stein hypersurface of a $2$-stein space $\tM$. Then for all $x \in M, \; y \in \tM$ and all $X \in T_x M, \; Y \in T_y\tM$ we have
\begin{equation} \label{eq:22stein1}
  \Tr R_X = c_1 \|X\|^2, \quad \Tr (R_X^2) = c_2 \|X\|^4, \qquad \Tr \tR_Y = \tc_1 \|Y\|^2, \quad \Tr (\tR_Y^2) = \tc_2 \|Y\|^4,
\end{equation}
for some constants $c_1, c_2, \tc_1, \tc_2 \in \br$, where $R$ and $\tR$ are the curvature tensors of $M$ and $\tM$ respectively.

Let $x \in M \subset \tM$ and let $X_1, X_2, \dots, X_{n-1}, X_n$ be an orthonormal basis for $T_x\tM$ such that $X_n$ is orthogonal to $T_xM$. Denote $\Sh$ the shape operator of $M$ at $x$ and $B$ the restriction of $\tR_{X_n}$ to $T_xM$, so that $B$ is the symmetric operator on $T_xM$ defined by $\<BX,X\>=\tR(X,X_n,X_n,X)$. By Gauss equation, for $X \in T_xM, \; t \in \br$ and $i,j<n$ we have
\begin{equation}\label{eq:RXtn}
\begin{split}
  \<\tR_{X+tX_n}X_n,X_n\> & = \<BX,X\>, \\
  \<\tR_{X+tX_n}X_i,X_n\> & = \tR(X_i, X, X, X_n) - t \<BX,X_i\>, \\
  \<\tR_{X+tX_n}X_i,X_j\> & = \<R_{X}X_i,X_j\> - (\<\Sh X,X\>\<\Sh X_i,X_j\>-\<\Sh X,X_i\>\<\Sh {L}X,X_j\>) \\
  & \quad + t(\tR(X_i, X, X_n, X_j) + \tR(X_i, X_n, X, X_j)) +t^2  \<BX_i,X_j\>,
\end{split}
\end{equation}
Then from \eqref{eq:22stein1} with $Y=X+tX_n$ we obtain $\Tr \tR_{X+tX_n} = \tc_1 (\|X\|^2 + t^2)$ and so from \eqref{eq:RXtn},
\begin{gather}\label{eq:EinE1}
  \Tr B = \tc_1, \\
  \sum_{i<n} \tR(X_i, X, X_n, X_i) =0, \label{eq:EinE2} \\
  B=-\Sh^2 + (\Tr \Sh) \Sh +(\tc_1-c_1) \id. \label{eq:EinE3}
\end{gather}
Note that taking the trace in~\eqref{eq:EinE3} we get $(n-1) c_1 - (n-2) \tc_1 = (\Tr \Sh)^2 - \Tr (\Sh^2)$ by~\eqref{eq:EinE1}.

Furthermore, from \eqref{eq:22stein1} with $Y=X+tX_n$ we obtain $\Tr (\tR_{X+tX_n}^2) = \tc_2 (\|X\|^4 + 2 t^2 \|X\|^2 + t^4)$. Using \eqref{eq:RXtn} and collecting the coefficients of the powers of $t$ we get
\begin{gather}
  \Tr (B^2)  = \tc_2, \label{eq:2in21} \\
  \sum_{i<n} \tR(X_i, X, X_n, BX_i)  = 0,  \notag \\ 
  \begin{split}
  \|BX\|^2 + \Tr (R_X B) &- \tfrac12 \<((\Tr (\Sh B) \Sh  - \Sh B\Sh )X,X\> \\
  &+ \sum_{i,j <n} (\tR(X_i, X, X_n, X_j) + \tR(X_j, X, X_n, X_i))^2 = \tc_2 \|X\|^2,  \end{split} \label{eq:2in23} \\
  \sum_{i<n} \tR(X_i, X_n, X, R_X X_i) - \<\Sh X,X\> \sum_{i<n} \tR(X_i, X_n, X, \Sh  X_i) + \<\tR_{\Sh X}X,X_n\> -\<R_XX_n, BX\> = 0,  \notag \\ 
  \begin{split}
  2 \|\tR_XX_n\|^2 & - \<BX,X\>^2 - 2 \<\Sh X,X\> \Tr (R_X \Sh ) + 2 R(\Sh X,X,X,\Sh X)  \\ & + \Tr (\Sh^2) \<\Sh X,X\>^2 + \|\Sh X\|^4 - 2 \<\Sh X,X\> \<\Sh^3X,X\> = (\tc_2-c_2) \|X\|^4.  \notag
  \end{split} 
\end{gather}

\begin{proof}[Proof of Theorem~\ref{t:2st}]
\eqref{it:2sttg} In this case, $\Sh=0$ and so by~\eqref{eq:EinE3}, $B = \rho \id$ for some $\rho \in \br$. Then from \eqref{eq:EinE1} we have $\tc_1 = (n-1) \rho$ and from \eqref{eq:2in21}, $\tc_2 = (n-1) \rho^2$. Then by~\eqref{eq:22stein1}, for any $y \in \tM$ and any $Y \in T_y\tM$ we obtain  $\Tr \tR_Y = (n-1) \rho \|Y\|^2$ and $\Tr (\tR_Y)^2 = (n-1) \rho^2 \|Y\|^4$, and so by Cauchy-Schwartz inequality, the restriction of $\tR_Y$ to the subspace $Y^\perp$ equals $\rho$ times the identity operator. So $\tM$ is of constant curvature, and then $M$ is also of (the same) constant curvature.

\eqref{it:2stcc} We have $B = \rho \id$. Then from (\ref{eq:EinE1}, \ref{eq:2in21}) we obtain $\tc_1 = (n-1) \rho, \; \tc_2 = (n-1) \rho^2$. Furthermore, in equation \eqref{eq:2in23}, we have $\tR(X_i, X, X_n, X_j)=0$ and $\Tr (\Sh B) \Sh - \Sh B \Sh= \rho(c_1-(n-2)\rho)$ by~\eqref{eq:EinE3} and so we get $c_1=(n-2)\rho$. Then from~\eqref{eq:EinE3} it follows that $\Sh^2=(\Tr \Sh) \Sh$, and so $\operatorname{rk} \Sh \le 1$. But then taking $t=0$ in the third equation of~\eqref{eq:RXtn} we find that $M$ has the same constant curvature as $\tM$.
\end{proof}



\end{document}